\newcommand{\Rmnum}[1]{\expandafter\@slowromancap\romannumeral #1@}
\newtheorem{theorem}{Theorem}
\newtheorem{lemma}{Lemma}
\newtheorem{definition}{Definition}
\newtheorem{proposition}[theorem]{Proposition}
\newtheorem{remark}{Remark}
\newtheorem{example}{Example}
\def\BibTeX{{\rm B\kern-.05em{\sc i\kern-.025em b}\kern-.08em
    T\kern-.1667em\lower.7ex\hbox{E}\kern-.125emX}}
\begin{document}
\title{Automatic Implementation of Neural Networks through Reaction Networks---Part \Rmnum{1}: Circuit Design and Convergence Analysis}
\author{Yuzhen Fan, Xiaoyu Zhang, Chuanhou Gao, \IEEEmembership{Senior Member, IEEE}, and Denis Dochain
\thanks{This work was funded by the National Nature Science Foundation of China under Grant No. 12320101001, 12071428, and 62303409, the China Postdoctoral Science Foundation under Grant No. 2023M733115. This work is an extension of our earlier paper in the 4th IFAC Workshop on Thermodynamics Foundations of
Mathematical Systems Theory, Jul. 25-27, 2022, Canada.} 
\thanks{Y. Fan and C. Gao are with the School of Mathematical Sciences, X. Zhang is with the College of Control Science and Engineering, Zhejiang University, Hangzhou, China (e-mail: yuzhen$\_$f@zju.edu.cn, gaochou@zju.edu.cn (correspondence), Xiaoyu\_Z@zju.edu.cn). }
\thanks{D. Dochain is with 
ICTEAM, UCLouvain, B ˆatiment Euler, avenue Georges Lemaˆıtre 4-6, 1348 Louvain-la-Neuve, Belgium (e-mail: denis.dochain@uclouvain.be).}}

\maketitle
 
\begin{abstract}
Information processing relying on biochemical interactions in the cellular environment is essential for biological organisms. The implementation of molecular computational systems holds significant interest and potential in the fields of synthetic biology and molecular computation. This two-part article aims to introduce a programmable biochemical reaction network (BCRN) system endowed with mass action kinetics that realizes the fully connected neural network (FCNN) and has the potential to act automatically in vivo. In part \Rmnum{1}, the feedforward propagation computation, the backpropagation component, and all bridging processes of FCNN are ingeniously designed as specific BCRN modules based on their dynamics. This approach addresses a design gap in the biochemical assignment module and judgment termination module and provides a novel precise and robust realization of bi-molecular reactions for the learning process. Through equilibrium approaching, we demonstrate that the designed BCRN system achieves FCNN functionality with exponential convergence to target computational results, thereby enhancing the theoretical support for such work. Finally, the performance of this construction is further evaluated on two typical logic classification problems.
\end{abstract}
\begin{IEEEkeywords}
biochemical reaction network, computational modules, exponential convergence, mass-action kinetics, neural network
\end{IEEEkeywords}


\section{Introduction}
\label{sec:introduction}
\IEEEPARstart{L}{iving} cells of natural organisms have impressive capabilities of responding to cellular environments by adapting themselves to external stimuli. Indeed, these goals are powered by the sophisticated dynamical behaviors resulting from biomolecular interactions involved in cells, such as gene regulatory networks, signal pathway networks, and so on. Synthetic biology is a booming field of utilizing engineering principles to design biological components (e.g. oscillators\cite{elowitz2000synthetic}, toggle switches\cite{gardner2000construction}) and assemble them into more complex biological circuits to mimic the life behavior or program cells with novel and practical functionalities for use in other fields, such as medical care, energy, environment and so on \cite{qian2018programming}. More recently, the move toward a design based on a ``system level'' in this field made creating sufficiently complicated biochemical reaction circuits to accomplish more tasks in the real world possible. 


One type of common task being carried out focusing on the enhancement of robustness includes regulating protein production under unintended interactions \cite{qian2021robustness}, designing molecular controllers to reject disturbances and noise in cellular environments\cite{aoki2019universal, xiao2018robust}, etc., which are always finished by engineered molecular control modules and meanwhile reveal the regulatory characteristics to maintain homeostasis in living organisms. Another one refers to the information-processing function that is also essential to biological organisms, for example, recognizing molecular patterns \cite{cherry2018scaling} and decision-making in vivo by acquiring and processing multiple biological signal inputs. Related nature examples can be referenced to bacteria following chemical gradients (chemotaxis) \cite{wadhams2004making} and the brain distinguishing complex odor information \cite{mori1999olfactory}. In synthetic biology, such missions have a wide range of applications in medical care, such as disease diagnosis \cite{zhang2020cancer}, and they
could be performed by dint of molecular computation, namely embedding computation into the context compatible with the cellular environment and traditional computers cannot go. It requires us to develop the computational capabilities of biochemical kinetics for designing molecular computation modules and further implementing complicated biochemical information-processing systems.

Chemical reaction network (CRN) is a widely recognized mathematical model to describe biomolecular interactions. Moreover, it has been demonstrated that arbitrary CRN endowed with mass action kinetics could be realized as DNA strand displacement reactions \cite{soloveichik2010dna}, which implies that abstract CRN model could always be given physical entities. Therefore, it is regarded as a widely used bridge to design biological circuits at the level of dynamical systems, especially used as a ``programming language'' \cite{vasic2020crn++} in molecular computation due to the sequence-specific nature and high-density information storage capacity of DNA molecules. Nevertheless, compiling a molecular computing system to perform complex computational or information-processing tasks remains a challenging matter. Implementing basic mathematical operations with CRNs such as addition, subtraction, multiplication, and division has been well-studied \cite{buisman2009computing} but it is still difficult to program CRNs directly on this basis for complicated tasks such as classification and decision-making. Besides, living systems could exhibit adaptive behaviors to changes in the environment while most synthetic circuits are only designed for predefined goals, so constructing biological systems capable of intelligent learning ability is of great interest and reward, especially applied in smart therapeutics \cite{lakin2023design,zhao2023synthetic,yan2023applications,auslander2012smart} to identify and adapt to each patient's pattern.

Artificial neural networks have established a mature computational framework to solve various problems in silicon and the parameters could be trained according to data sets by mathematically well-defined optimization algorithms, which is akin to the adaptation of our biological system to outside environments. Therefore, they are regarded as a promising programming paradigm for molecular computation systems capable of learning and performing brain-like information-processing functionalities. Previous work \cite{okumura2022nonlinear,cherry2018scaling,qian2011neural} demonstrated a concrete implementation of the feedforward structure of neural networks using DNA strand displacement reactions in wet labs. Apart from that, theoretical work on programming neural networks through CRN appears following closely and has recently seen rapid development. The perceptron model with ReLU or Sigmoid activation functions was realized by molecular sequestration reaction systems and phosphorylation-dephosphorylation cycles, respectively \cite{samaniego2021signaling,9304515}. A class of rate-independent reaction networks was discovered to implement neurons with the ReLU nonlinearity solely through stoichiometric changes \cite{vasic2022programming}. A general mathematical framework for implementing feedforward computation with a kind of smooth ReLu function was proposed in \cite{anderson2021reaction}. Some of the designed feedforward structures above are simple, lacking in scalability for more practical applications, and all are only for the given weight parameters. To achieve molecular learning circuits, some qualitative results \cite{blount2017feedforward} and quantitative implementation \cite{arredondo2022supervised,lakin2023design} of the weights update part have recently emerged.

Admittedly, excellent results have been achieved in compiling neural networks using CRN systems. Limitations remain in terms of completeness, efficiency of the realization, and theoretical support. Regarding completeness, the automated operation of the biochemical neural networks is not guaranteed currently since the process for feeding training samples into chemical neural networks before every iteration was carried out manually by controlling round time nowadays. In other words, the individual CRN design for the assignment module was not considered in the literature, which should be the key part of implementing a complete molecular learning circuit with the ability to run in the cellular environment without artificial interventions. The quantitative design for the learning part is very limited and mainly focuses on programming the gradient descent algorithm by approximating the derivative operator\cite{arredondo2022supervised,9304515} by the difference format,  which may introduce the corresponding approximation errors and cause a complicated design. Also, the judgment operation to terminate the training process when the preset error precision is satisfied is absent now. Additionally, mass-action models of chemical systems are polynomial dynamical systems and are known to exhibit myriad dynamic behaviors \cite{zhang2020persistence,zhang2023persistence,angeli2021robust,forger2017biological}. It is highly nontrivial to design appropriate mass-action reaction systems leading to desirable steady states and rapid convergence speed \cite{anderson2021reaction} with given parameters and initial conditions. Thus, performing dynamic analysis is essential for assurance of the correct construction and these properties.

In this paper, we propose in Section \ref{sec:module design} a complete biochemical fully connected neural networks design method that promises to work independently in vivo, where we give a brand novel CRN design for the assignment module utilizing linear transformation, realize the Sigmoid activation function via modifying an autocatalytic reaction, and construct the judgment module that could control the termination of training processes based on a bistable reaction system. Also, we put forward a \textit{Split Factor - Multiplication} method to calculate the multiplication of any number of factors and then construct a precise implementation of the mini-batch gradient descent (MBGD) algorithm, where the reaction systems designed for the negative gradient computation can be demonstrated to occur in parallel regardless of the depth of neural networks. 
Our construction could handle any combination ways of factors, be robust to the initial concentrations, and be finished only by bi-molecular reactions with the minimum number of intermediate species (more rational in practice and easier to be realized as DNA strand displacement reactions).
Furthermore, Section \ref{sec:dynamic analysis} provides a rigorous mathematical analysis of the realization performance at the system level. We ensure that our designed biochemical systems implement a neural network and have certain desirable properties, where the feedforward reaction system can converge to the target output exponentially, and the positive equilibrium points of the learning module in the final iteration are the fixed points of the MBGD algorithm with the exponential convergence.  Finally, the biochemical neural network is used in two typical logic classification problems to confirm its efficiency in Section \ref{sec:case study}.

\noindent \textbf{Notations:} $\mathbb{R}^n, \mathbb{R}^n_{\geq0}, \mathbb{R}^n_{>0}, \mathbb{Z}^n_{\geq0}$ denote $n$-dimensional real space, nonnegative space, positive space and nonnegative integer space, respectively. 
$\mathscr{C} (\cdot;*)$ is the set of continuous differentiable functions from $\cdot$ to $*$. For any real value function $f(\cdot)$ and a matrix $A = (a_{ij}) \in \mathbb{R}^{n \times n}$, $f(A) = (f(a_{ij})) \in \mathbb{R}^{n \times n} $ denotes $f(\cdot)$ acts on any element of $A$. Besides, $a_{\cdot j}$ indicates the $j$th column, and $a_{i\cdot}$ represents the $i$th row in $A$. The norm $\Vert \cdot \Vert$ appearing in this paper is regarded as $\infty$-norm in Euclidean space. $\mathds{1}$ and $\mathds{O}$ represents a dimension-suited column vector with all elements to be $1$ and $0$, respectively.

\section{Preliminaries}
\label{sec:pre}
In this section, we formally introduce the basic concepts of CRNs
\cite{feinberg1972complex,feinberg1979lectures,horn1972general}, the fully connected neural network (FCNN) with the specific structure of \textit{two-two-one}, and the computation potential of CRNs as a programming language. 

\subsection{Chemical Reaction Network}
Consider a CRN with $n$ species that interact with each other through $r$ reactions, then the CRN definition takes 
\begin{definition}[CRN]
A CRN consists of the following three finite sets
\begin{enumerate}
    \item \textit{species set:}
    $\mathcal{S}=\{X_{1}, \ldots,X_{n}\}$ that denotes the subjects participated in reactions; 
    \item \textit{complex set:}
    $\mathcal{C}=\bigcup_{j=1}^{r}\{v_{.j},v'_{.j}\}$ with $v_{.j},v'_{.j}\in \mathbb{Z}_{\geq 0}^n$, where each complex is the linear combination of species $\bigcup_{i=1}^{n}X_i$, meaning the $i$th entry of $v_{.j}$, i.e., $v_{ij}$, is the stoichiometric coefficient of $X_i$;
    \item \textit{reaction set:} $\mathcal{R}=\bigcup_{j=1}^{r}\{v_{.j}\to v'_{.j}\}$ satisfying that $\forall v_{.j}\to v'_{.j}\in \mathcal{R}$, $v_{.j}\neq v'_{.j}$, and $\forall v_{.j}\in \mathcal{C}$, $\exists v'_{.j}\in\mathcal{C}$ supporting either $v_{.j}\to v'_{.j}$ or $v'_{.j}\to v_{.j}$.  
\end{enumerate}
We often use the triple $(\mathcal{S},\mathcal{C},\mathcal{R})$ to represent a CRN. 
\end{definition}


Based on the above definition, the $j$th reaction is written as
\begin{equation}
	\sum_{i=1}^{n} v_{ij} X_{i} \longrightarrow \sum_{i=1}^{n} v'_{ij} X_{i},
	\label{eq:1}
\end{equation} 
where $v_{.j}=(v_{1j},...,v_{nj})^\top, v'_{.j}=(v'_{1j},...,v'_{nj})^\top$ are called \textit{reactant complex} and \textit{product complex}, respectively. Further, we define the \textit{reaction vectors} of $j$th reaction by $v'_{.j}-v_{.j}$ and the linear subspace spanned by all reaction vectors, called the \textit{stoichiometric subspace}, by
\begin{equation}\label{stoisub}
\mathscr{S} \triangleq span\{v'_{.j}-v_{.j},j=1,...,r\}.
\end{equation}
The dynamics of $(\mathcal{S},\mathcal{C},\mathcal{R})$ that captures the change of the concentration of each species, labeled by $x\in\mathbb{R}^n_{\geq 0}$, may be reached if a $r$-dimensional vector-valued function $\mathscr{K}\in \mathscr{C}(\mathbb{R}^n,\mathbb{R}^r)$ is defined to evaluate the reaction rates and the balance
law is further utilized, written as
\begin{equation}
    \frac{dx(t)}{dt} = \Gamma \mathscr{K}(x),~~x \in \mathbb{R}^n_{\geq0}.
    \label{eq:2}
\end{equation}
where $\Gamma\in\mathbb{Z}^{n\times r}$ is the \textit{stoichiometric matrix} with the $j$th column $\Gamma_{\cdot j}=v'_{.j}-v_{.j}$. Usually, \textit{mass action kinetics} (MAK) is used to evaluate the reaction rate, which induces the rate of the $j$th reaction as
\begin{equation}\label{mak}
\mathscr{K}_j(x)=  k_j x^{v.j} \triangleq k_j\prod_{i=1}^{n} x_{i}^{v_{ij}},
\end{equation}
where $k_j>0$ represents the rate constant. Then the integral form of the solution to \eqref{eq:2} plus \eqref{mak} is
\begin{equation}\label{dynIntegral}
\begin{split}
    x(t) 
    &=  x(0)+ \sum_{j=1}^r k_j (\int_{0}^{t} \prod_{i=1}^{n} x_{i}^{v_{ij}}(\tau) d\tau) (v'_{.j}-v_{.j}),
\end{split}
\end{equation}
where $x(0) \in \mathbb{R}^n_{\geq0}$ represents any given initial point. Clearly, the CRN system endowed with MAK offers polynomial ordinary differential equations (PODEs), often termed as \textit{mass action system} (MAS) and denoted by the quad $(\mathcal{S},\mathcal{C},\mathcal{R},k)$. The dynamics of (\ref{dynIntegral}) means that any trajectory of a MAS will evolve within a special equivalence class, called \textit{Stoichiometric Compatibility Class}.


\begin{definition}[Stoichiometric Compatibility Class] Given a CRN $(\mathcal{S},\mathcal{C},\mathcal{R})$ and $x_0 \in \mathbb{R}^n_{\geq0}$, the set induced by $\mathcal{P}(x_0)=\{x\in\mathbb{R}^n_{\geq 0}~|~x-x_0\in\mathscr{S}\}$ is called the \textit{stoichiometric compatibility class} of $x_0$, and $\mathcal{P}^+(x_0)=\mathcal{P}(x_0)\cap \mathbb{R}^n_{>0}$ is named the \textit{positive stoichiometric compatibility class} of $x_0$.   
\end{definition}

We further give the definitions of equilibrium and exponential convergence. 
\begin{definition}[Equilibrium]
For a MAS $(\mathcal{S},\mathcal{C},\mathcal{R},k)$ governed by (\ref{eq:2}) plus (\ref{mak}), if a constant vector $\bar{x} \in 
\mathbb{R}^n_{\geq 0}$ or $\forall x_{\mathcal{A}}\in\mathcal{A} \subset \mathbb{R}^n_{\geq 0}$ satisfy $\Gamma \mathscr{K}(\bar{x})=0$ or $\Gamma \mathscr{K}(x_{\mathcal{A}}) =0 $, then $\bar{x}$ is called the \textit{nonnegative equilibrium point} and $\mathcal{A}$ is the \textit{equilibrium set}. Both are collectively known as \textit{equilibrium}.
\end{definition}

\begin{definition}[Exponential Convergence]
Consider a MAS $(\mathcal{S},\mathcal{C},\mathcal{R},k)$ described by (\ref{eq:2}) plus (\ref{mak}) and admitting an equilibrium point $\bar{x}$ or an equilibrium set $\mathcal{A}$. The solution $x(t)$ of this MAS \textit{converges exponentially} to $\bar{x}$ or to $\mathcal{A}$ if there are positive constants $M,\gamma$ \textgreater $0$ supporting $\Vert x(t) - \bar{x} \Vert \leq M e^{-\gamma t}$ or $\inf_{y \in \mathcal{A}} \Vert x(t)-y \Vert \leq M e^{-\gamma t}$ for all $t\geq 0$.
\end{definition}


\subsection{Fully Connected Neural Network}


Assume a FCNN with an input layer, a hidden layer and an output layer, and let it perform a classification task for the data set $\mathbb{D}=\{(x^i,d^i)\}_{i=1}^p$ with $p$ to represent the size of $\mathbb{D}$. The training process utilizes the common gradient descent-based error backpropagation algorithm (GDBP) but with the mini-batch strategy (i.e., at each iteration only part of samples are fed to update weights), and the activation function takes the \textit{Sigmoid function} given by 
\begin{equation}\label{eq:sigmoid}
f(z)=\frac{1}{1+e^{-z}}.
\end{equation}
Mathematically, the training process can be expressed according to feedforward propagation and backward propagation. For simplicity but without the loss of generality, we fix the structure of FCNN to be \textit{two-two-one} nodes in every related layer to exhibit this process.
\subsubsection{Feedforward Propagation}
Denote the sample matrix by $\chi\in\mathbb{R}^{3\times p}$ with the $l$th column to be $\chi_{.i}=(x^i_1,x^i_2,d^i)^\top$, the \textit{input matrix} by $\Xi\in\mathbb{R}^{3 \times \tilde{p}}_{\geq 0}$ with $\Xi_{3.}=\mathds{1}^\top$ to handle dumb nodes corresponding to bias in the input layer and $\tilde{p}$ ($\tilde{p}\leq p$) to represent the mini-batch size for training, and the \textit{weight matrices} connecting input-hidden layers and hidden-output layers by $\mathcal{W}_1\in\mathbb{R}^{2\times 3}$ and $\mathcal{W}_2\in\mathbb{R}^{1\times 3}$ with their last columns to represent the bias attributed to the nodes of hidden and output layers, respectively. Then the feedforward computation process at the $m$th iteration follows
\begin{gather}\label{eq:weightsum}
\begin{split}
\mathcal{N}^m&=\mathcal{W}_1^m\cdot \Xi^m,\Upsilon^m(\mathcal{W}^m_1) =f(\mathcal{N}^m),\tilde{\Upsilon}^m=
    \begin{pmatrix}
        \Upsilon^m\\
        \mathds{1}^\top
\end{pmatrix},\\
\tilde{\mathcal{N}}^m&=\mathcal{W}_2^m\cdot \tilde{\Upsilon}^m, y^m(\mathcal{W}_2^m) =f(\tilde{\mathcal{N}}^m),
 \end{split}
\end{gather}
where $\mathcal{N}\in \mathbb{R}^{2\times \tilde{p}}$, $\Upsilon\in\mathbb{R}^{2\times \tilde{p}}$ is the \textit{hidden matrix}, $\tilde{\Upsilon}\in\mathbb{R}^{3\times \tilde{p}}$ is constructed to add a row of $1$ to $\Upsilon$ to handle dumb nodes in the hidden layer, $\tilde{\mathcal{N}}\in \mathbb{R}^{1\times \tilde{p}}$, and $y\in\mathbb{R}^{1\times \tilde{p}}$ is the \textit{output matrix}. We further can compute the training error 
\begin{equation}
    \mathcal{E}^m(\mathcal{W}_1^m, \mathcal{W}_2^m)=\frac{1}{2}(\delta^\top-y^m)(\delta^\top-y^m)^\top,
    \label{eq:loss fun}
\end{equation} 
where $\delta=(d^1,...,d^{\tilde{p}})^\top$. This finishes the feedforward computation of the $m$th iteration with $\tilde{p}$ samples.
\subsubsection{Backward Propagation}
This process serves to update weights according to GDBP, which takes
\begin{gather}
\begin{split}
    \mathcal{W}^{m+1}&=\mathcal{W}^{m} + \Delta\mathcal{W}^m
       \label{eq:GD}
\end{split}
\end{gather}
with $\mathcal{W}=
    \begin{pmatrix}
        \mathcal{W}_1\\
        \mathcal{W}_2
\end{pmatrix}$, $\eta \in \left(0,1\right]$ to be the learning rate and 
\begin{gather}\label{eq:weight_update}
\begin{split}
\Delta \mathcal{W}_{1_{ij}}^m &= -\eta \frac{\partial \mathcal{E}^m}{\partial  \mathcal{W}^m_{1_{ij}}} = -\eta \sum^{\tilde{p}}_{l=1} e^m_l \frac{\partial f}{\partial n^m_{3l}} \mathcal{W}^m_{2_{1i}} \frac{\partial f}{\partial n^m_{il}} \Xi_{jl}^m, \\
\Delta \mathcal{W}_{2_{1j}}^m &=-\eta \frac{\partial \mathcal{E}^m}{\partial  \mathcal{W}^m_{2_{1j}}} = -\eta \sum^{\tilde{p}}_{l=1} e^m_l \frac{\partial f}{\partial n^m_{3l}} \tilde{\Upsilon}^m_{jl},\\
&~~~~i=1,2;~j=1,2,3.
\end{split}
\end{gather}
Here, $e_l^m=(\delta^{\top} - y^m)_{1l},~(n^m_{1l}, 
n^m_{2l})^{\top} = \mathcal{N}^m_{\cdot l},~n^m_{3l} = \tilde{\mathcal{N}}^m_{1l}$. After finishing updating weights, the training for the $m+1$ iteration fires through feeding another group of $\tilde{p}$ samples from $\mathbb{D}$ into the input layer, and performing the feedforward computation repeatedly. The termination condition is usually based on the precision requirement, i.e., $\vert e^l \vert$ is less than the preset threshold for all $l$, otherwise, the training process continues. We present a complete flow chart in Fig. 1 to exhibit how FCNN works.  

\begin{figure}[!t]
\centerline{\includegraphics[width=\columnwidth]{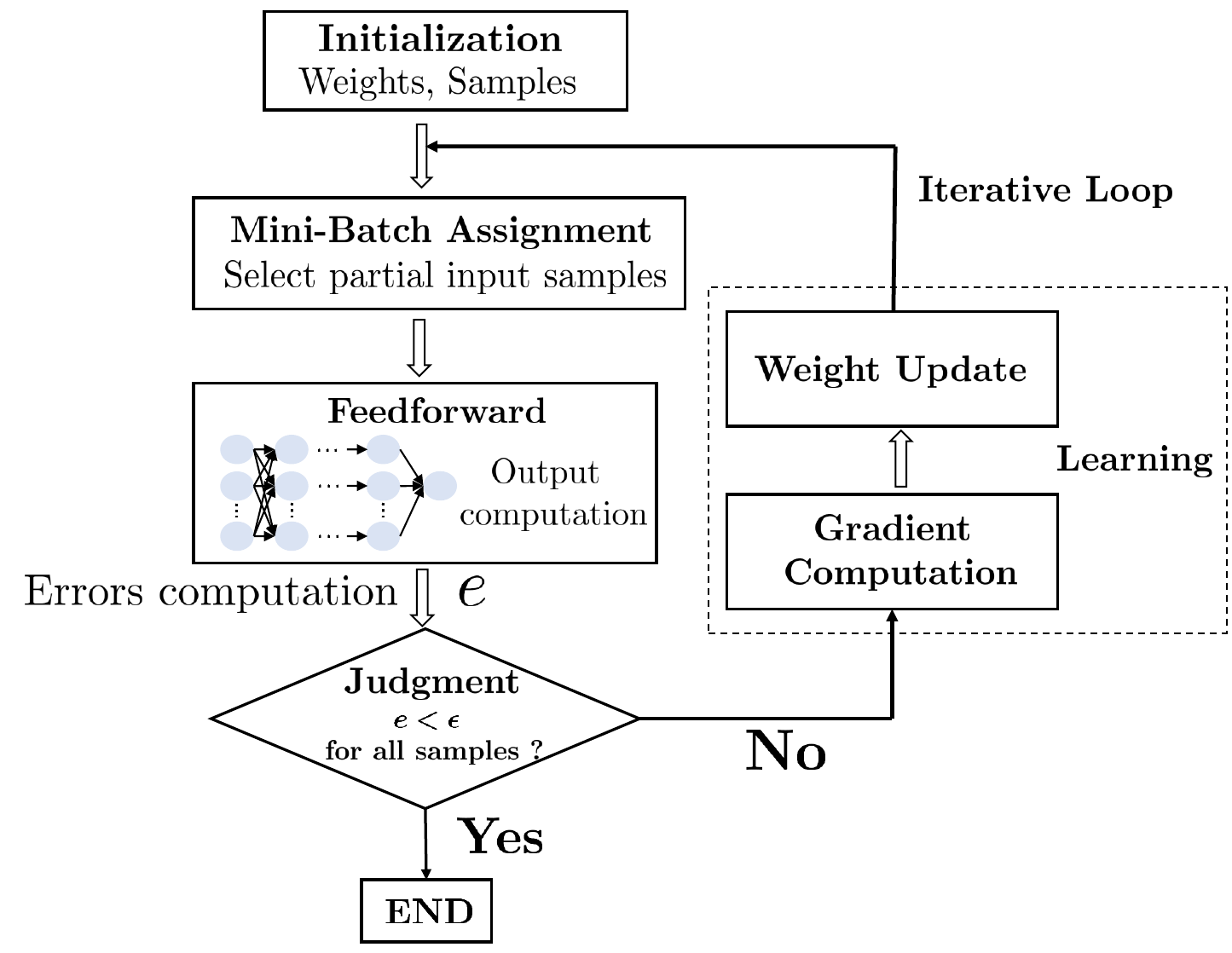}}
\caption{The flow chart of FCNN.}
\label{fig:1}
\end{figure}



\subsection{An Example of Implementing Addition Calculation with CRNs}
There are various calculations, like addition, subtraction, etc, and information processing, like assignment, and logical judgment, etc. in the process of FCNN. Every operation needs to be implemented by CRNs for our current task, and moreover, each variable (a fixed constant after calculation or processing) involved in FCNN needs to be represented by some counterpart in CRN. Thus, for a designed MAS governed by \eqref{eq:2} plus \eqref{mak} with given $x(0) \in \mathbb{R}^n_{\geq0}$,
this prompts the \textit{limiting steady states} \cite{anderson2021reaction} of some species
\begin{equation} \label{def:lss}
  \mathscr{L}[x(t)]= (\lim_{t \to \infty} x_i(t))_{X_i \in \mathcal{S}}
\end{equation}
to represent the corresponding variables in FCNN. Note that \eqref{def:lss} includes globally/locally
asymptoticaly stable (GAS/LAS) equilibrium of partial species if any.
In the following, we give an example of programming addition operation using CRN.      
\begin{example}
\label{exam:addition}
For the addition operation $a+b=c$ in $\mathbb{R}^n_{\geq0}$, we can implement it by the MAS of 
    \begin{gather}
        A \stackrel{1}{\longrightarrow} A+C,~~~ 
       B\stackrel{1}{\longrightarrow}B+C,~~~
       C\stackrel{1}{\longrightarrow}\emptyset,
       \notag
      \end{gather}
where $A$ and $B$ are called \textit{catalysts species} (no concentrations change before and after reactions). The dynamics takes $\dot{x}_c(t)=x_a(t)+x_b(t)-x_c(t)$, which determines the limiting steady state of $C$ to be $\overline{x}_c=\lim_{t \rightarrow \infty}x_c(t)=x_a(0)+x_b(0)$. Therefore, if the initial concentrations of $A$ and $B$ are set by $x_a(0)=a$ and $x_b(0)=b$, and the equilibrium concentration of $C$, i.e., $\bar{x}_c$, represents $c$, then the above network will finish the task of addition $a+b=c$.
\end{example} 


Note that in \textit{Example \ref{exam:addition}} all variables $a,~b,~c$ in addition operation are restricted in $\mathbb{R}^n_{\geq0}$. Thus, they can be expressed by the concentrations (nonnegative attribute) of species naturally. However, in actual calculations there usually involve negative variables. To comply with the nonnegative attribute of species concentration in CRN, 
we adopt the \textit{dual rail encoding} method to make the right match.

\begin{definition}[Dual Rail Encoding \cite{vasic2020deep}]\label{def:dualre}
For any variable $\zeta \in \mathbb{R}$, we take two species $\mathcal Z^{+},\mathcal Z^{-}$ with 
nonegative concentrations $\zeta^{+}(t) \geq0,\zeta^{-}(t) \geq0, \forall t \geq0$, 
between which the difference $\zeta^{+}(t)-\zeta^{-}(t)$ represents the real value $\zeta$.   
\end{definition}

Clearly, the number of variables, i.e., the number of species in CRN, need to be increased doubly after using the technique of dual rail encoding, which will naturally add the complexity of the dynamics of MAS, and further of the implementing process.   

\section{CRNs Programming FCNNs: Circuit Design} \label{sec:module design}
In this section, we use CRNs to program the \textit{two-two-one} structured FCNN utilizing the modular and layered design principle, i.e., dividing FCNN into the modules of assignment, feedforward propagation, judgment and loop, learning and clear-out. The accordingly generated biomolecular circuits are called Biochemical Fully Connected Neural Networks (BFCNNs) in the context. Morevoer, they can be extended to a FCNN with arbitrary depth and width.




Before we perform programming, we introduce the chemical oscillator \cite{vasic2020crn++,shi2022accurate} firstly, which will play an important role in the design of all modules. A major challenge in devising CRNs to perform a series of fussy calculations or information processing is that the latter are executed sequentially while chemical reactions occur in parallel. The chemical oscillator is a feasible solution to address this issue, which, also called molecular clock signal reactions, is a set of chemical reactions admitting a certain oscillatory behavior. The usual oscillatory reactions \cite{lachmann1995computationally} follow
\begin{equation}
    \begin{split}
        O_1+O_2 \stackrel{k_o}\longrightarrow 2 O_2 &,~~
       O_2+O_3 \stackrel{k_o}\longrightarrow 2 O_3, \\       
      \dots\dots ~~~~~~&, ~~O_r+O_1 \stackrel{k_o}\longrightarrow 2 O_1 
    \end{split}
        \label{eq:osci}
\end{equation}
where $O_i,~ i=1,...,r$ denotes clock signal species. We display the concentration curves of a simple chemical oscillator in Fig. \ref{fig:osci}. With certain initial values, each clock signal species has an oscillation phase apart from others in which a species has nonzero concentration while others are zero. We use these signals as catalysts acting on different reaction systems, which will turn on a reaction system but turn off others according to the concentrations (nonzero or zero) of catalysts. This allows to control the occurrence sequence of different reactions without artificial intervention. We will design suitable chemical oscillators to help program almost all modules mentioned above. 
\begin{figure}[!t]
\centerline{\includegraphics[width=\columnwidth]{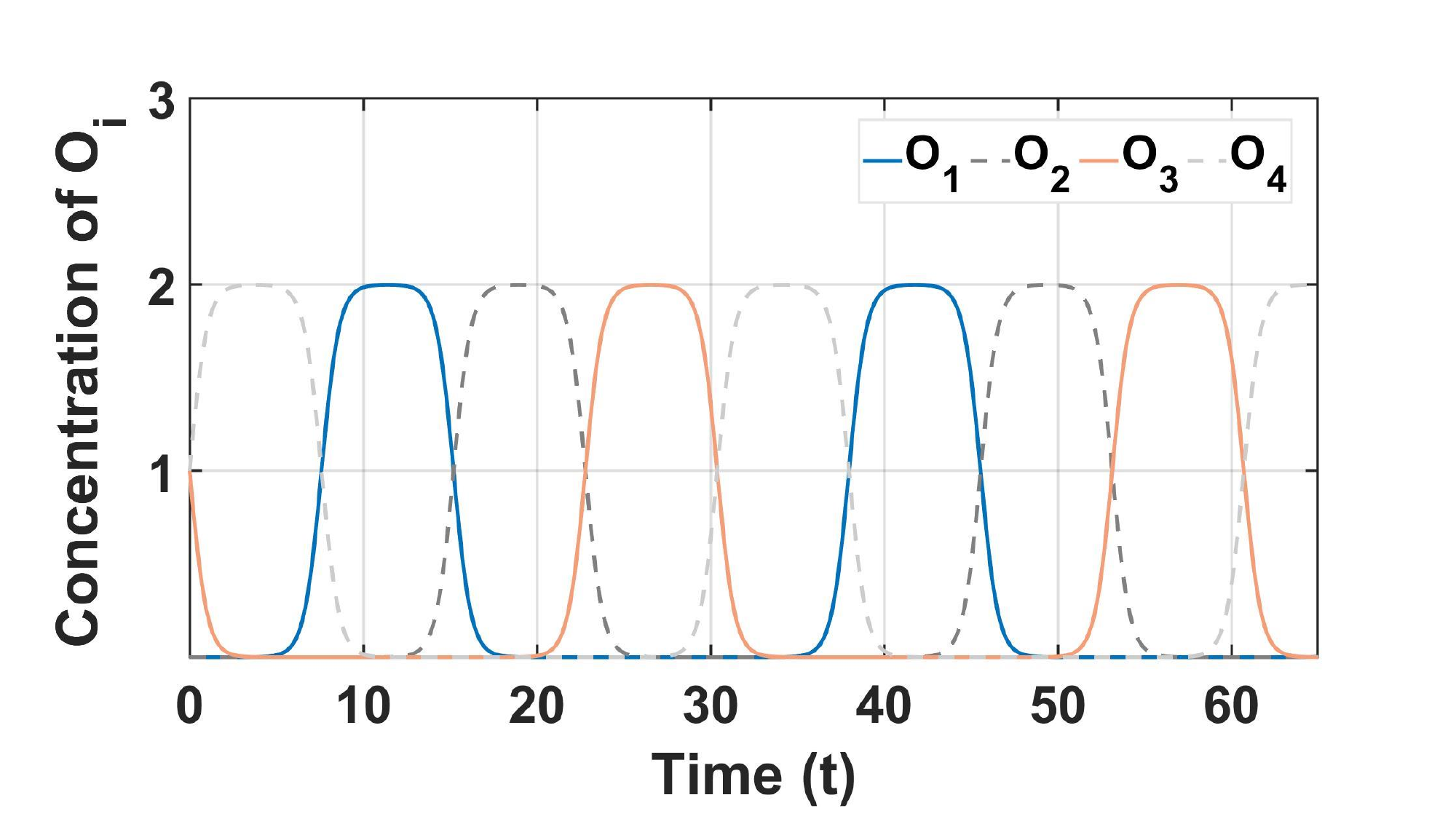}}
\caption{The response curve of the chemical oscillator with four species. Here, $O_1$ and $O_3$ can be used to control two reaction systems.}
\label{fig:osci}
\end{figure}

\subsection{Assignment Module}
From Fig. 1, the first step of FCNN is initialization, i.e., assigning values. We design \textit{Assignment Module} (aBCRN) to complete this task. 

The mini-batch strategy requests to divide the sample matrix $\tilde{\chi}$ into $\frac{p}{\tilde{p}}$ blocks, and feed each block according to $$``block_1 \rightarrow block_2 \rightarrow \cdots block_{\frac{p}{\tilde{p}}} "$$ into the input layer one by one during each iteration. There includes two steps to program this process: one step is to take $\tilde{p}$ samples from original $p$ samples orderly, and the other step is to circulate to feed $``block_1\to...\to block_{\frac{p}{\tilde{p}}}"$ into the input layer again and again. For simplicity but without loss of generality, we assume the samples to be nonnegative, and thus directly introduce the sample species set $\{X^i_1, X^i_2, D^i\}_{i=1}^p$, the input species set $\{S^l_1, S^l_2, S^l_3\}^{\tilde{p}}_{l=1}$, the order species set $\{C^l_i\}$, and the auxiliary order species set $\{\tilde{C}^l_i\}$ to facilitate updating $\{C^l_i\}$ after a round of iteration. The \textit{assignment biochemical reaction network} (aBCRN) operating in three phases is designed as $ \forall i \in \{1,...,p\},~l\in \{1,...,\tilde{p}\}, ~q \in \{1,2\}$ and any constant $k$ 
\begin{gather}
 \mathcal{O}_1:~~~~~  \begin{split}
       C^l_i+X_q^i & \stackrel{1}{\longrightarrow} C^l_i+X_q^i + S^l_q, ~~ S^l_q  \stackrel{1}{\longrightarrow} \varnothing, \\
       C^l_i+D^i & \stackrel{1}{\longrightarrow} 
       C^l_i+D^i +S^l_3, ~~ S^l_3 \stackrel{1}{\longrightarrow} 
       \varnothing,
       \end{split}~~~~
\label{eq:O1}
\\
   \mathcal{O}_3:~~~~~~~~~~~~~~~~~~~~~~  \begin{split}
       C^l_i \stackrel{k}{\longrightarrow} \tilde{C}^l_i,
       \end{split}~~~~~~~~~~~~~~~~~~~~~~~
\label{eq:O3}\\
\mathcal{O}_5:~~ \begin{split}
\tilde{C}^l_o \stackrel{k}{\longrightarrow} C^l_{\tilde{p}+o},
~ \tilde{C}^l_{p-\tilde{p}+l} \stackrel{k}{\longrightarrow}
C^l_l,
~\tilde{C}^l_j  \stackrel{k}{\longrightarrow} C^l_j
       \end{split},~~
\label{eq:O5}
\end{gather}
where $o \in I_l,~ j \notin I_l \bigcup \{p-\tilde{p}+l\}$ with $I_l\triangleq\{l,\tilde{p}+l,...,p-2\tilde{p}+l\}$ for any fixed $l$. 

\begin{remark}
We use $\mathcal{O}_i$ to identify different phases in aBCRN, which means the oscillatory signal $O_i$ produced by (\ref{eq:osci}) is added to all reactions in this phase as a catalyst when aBCRN works. Their addition will not change the dynamics of aBCRN, but can control the occurrence order of different phases. The explanation in this remark will apply to all BCRNs designed in the subsequent.
\end{remark}

In the following, also including the subsequent expressions, we use variables in lower case to represent the concentrations of the corresponding species in upper case to exhibit how aBCRN works. In particular, $c(t)=(c^1(t),\cdots,c^{\tilde{p}}(t))^{\top} \in \mathbb{R}^{p\tilde{p}}_{\geq 0}$, where $c^l(t) = (c^l_1(t),\cdots,c^l_p(t))$ with $c^l_i(t)$ to identify the concentration of $C^l_i$ ($i=1,...,p$).

\begin{proposition}
\label{prop.1}
The aBCRN of \eqref{eq:O1}, \eqref{eq:O3} and \eqref{eq:O5} driven by the corresponding ocsillatory signals can perform automatic assignment through equilibrium if the initial point is set by $c(0) = (e_1,\cdots, e_{\tilde{p}})^\top$ where $e_l \in \mathbb{R}^{1 \times p}$ is a unit row vector with the $l$th element to be 1, $(x_1^i(0),x_2^i(0),d^i(0))^\top=\chi_{.i}$, $\tilde{c}(0) = \mathbb{0}$ and $s(0) \in \mathbb{R}^{3\tilde{p}}_{\geq 0}$ to be any value. 
\end{proposition}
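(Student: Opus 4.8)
The plan is to exploit the oscillator-driven phase separation described in the preceding Remark, so that the three phases $\mathcal{O}_1$, $\mathcal{O}_3$, $\mathcal{O}_5$ may be analyzed as three standalone mass action systems executed sequentially, each inheriting as its initial condition the limiting steady state produced by the previous one. Because within any given oscillator window only that phase's reactions act on the triple $(s,c,\tilde c)$, the intervening windows leave these states invariant and the three phases compose directly. Each phase moreover consists only of catalytic production with first-order degradation (\eqref{eq:O1}) or unimolecular conversions (\eqref{eq:O3}, \eqref{eq:O5}), so every phase reduces to a \emph{linear} ODE system whose equilibrium and exponential convergence can be read off immediately. The organizing device is a single inductive invariant tracked across iterations: at the start of each iteration the order vector $c^l$ equals a unit row vector $e_{i^\ast}$ pointing at the sample index $i^\ast$ that slot $l$ should currently read.

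First I would treat phase $\mathcal{O}_1$. Since $C^l_i$, $X^i_q$, $D^i$ act only as catalysts, their concentrations are frozen during this window, so each input species obeys the scalar linear equation $\dot{s}^l_q = \sum_{i=1}^p c^l_i x^i_q - s^l_q$ (and analogously for $S^l_3$ with $D^i$), exactly the pattern of \emph{Example \ref{exam:addition}}. Its solution $s^l_q(t) = \sum_i c^l_i x^i_q + (s^l_q(0) - \sum_i c^l_i x^i_q)e^{-t}$ converges exponentially to $\sum_i c^l_i x^i_q$ for \emph{any} $s(0)$, which simultaneously establishes the robustness to $s(0)$ claimed in the statement and, under the invariant $c^l = e_{i^\ast}$, the correct assignment $s^l_q \to x^{i^\ast}_q$. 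For the base case the hypothesis $c(0)=(e_1,\dots,e_{\tilde p})^\top$ gives $i^\ast = l$, so slot $l$ is loaded with the $l$th sample $\chi_{\cdot l}$.

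Next I would show that the pointer is advanced correctly by the pair $\mathcal{O}_3$–$\mathcal{O}_5$. Phase $\mathcal{O}_3$ is a pure conversion $C^l_i \to \tilde{C}^l_i$, giving $c^l_i \to 0$ and $\tilde c^l_i \to \tilde c^l_i(\mathrm{in}) + c^l_i(\mathrm{in})$; starting from $\tilde c(0)=0$ and the frozen $c^l = e_{i^\ast}$, this transfers the unit mass to $\tilde c^l = e_{i^\ast}$ while conserving the total $\sum_i (c^l_i + \tilde c^l_i)$. Phase $\mathcal{O}_5$ is again a family of unimolecular conversions $\tilde C^l_{(\cdot)} \to C^l_{(\cdot)}$ with a relabelling of the product index, so the same linear argument returns the mass to the $C$-species, and the content is purely combinatorial. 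I would verify that the partition of $\{1,\dots,p\}$ into $I_l$, the singleton $\{p-\tilde p+l\}$, and the complement realizes the cyclic shift $i^\ast \mapsto i^\ast + \tilde p$ for $i^\ast \in I_l$ and the wrap $p-\tilde p + l \mapsto l$, so that $c^l$ returns to a unit vector, reproducing the invariant and closing the induction. Iterating then shows the slot-$l$ pointer traverses $l \to \tilde p + l \to \cdots \to p-\tilde p+l \to l$, i.e. the blocks are fed in the prescribed order $block_1 \to \cdots \to block_{p/\tilde p}$ and then repeat.

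The per-phase linear ODE solution and its exponential rate are routine and immediate. I expect the main obstacle to be the index bookkeeping of $\mathcal{O}_5$: confirming that for every slot $l$ and at every iteration the three reaction families in \eqref{eq:O5} partition the sample indices without overlap or omission, so that exactly one unit of pointer mass survives, no index is revisited within a cycle, and every block is visited exactly once before the wrap. Making this precise reduces to checking that $I_l$, $\{p-\tilde p+l\}$, and the complement are disjoint with union $\{1,\dots,p\}$ and that the induced map is a single $\tfrac{p}{\tilde p}$-cycle on the residue class of $l$ modulo $\tilde p$; once this is settled, the convergence-through-equilibrium claim follows from the phase-wise exponential convergence.
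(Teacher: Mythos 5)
Your proposal is correct and follows essentially the same route as the paper's proof: oscillator-enforced phase separation, a per-phase linear ODE analysis with catalysts frozen (yielding the exponentially attained limiting steady states $\bar s^l=\mathcal X(0)(c^l(0))^\top$, the transfer $c\to\mathds{O}$, $\tilde c\to c(0)$, and the index-shifted reassignment in $\mathcal{O}_5$), followed by the combinatorial check that the shift cycles the blocks. Your ``pointer invariant'' $c^l=e_{i^\ast}$ is just a cleaner packaging of the paper's concluding periodicity argument $\bar c^l: e_l\to e_{\tilde p+l}\to\cdots\to e_l$, not a different method.
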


\begin{proof}
The detailed proof appears in Appendix, which may apply to all propositions, lemmas and theorems.
\end{proof}



\subsection{Feedforward Propagation Module}
In this subsection, we design the \textit{Feedforward Propagation Module} (fBCRN), including \textit{linear weighted sum} BCRN (lwsBCRN) and \textit{nonlinear Sigmoid activation BCRN} (sigBCRN). 
\subsubsection{lwsBCRN} This module contributes to programming the computations of $\mathcal{N}^m,~\tilde{\mathcal{N}}^m$ in (\ref{eq:weightsum}). We use the \textit{dual-rail encoding} technique \cite{vasic2020deep} to handle the negative variables, and omit the iteration identification $m$ of all variables if any in Section \ref{sec:pre} for convenience.

For the expression of $\mathcal{N}$ in (\ref{eq:weightsum}), with the technique said in \textit{Definition \ref{def:dualre}} we can rewrite it to be
\begin{gather}\label{eq:n=ws}
\begin{split}
        \mathcal{N}
=
(\mathcal{W}^+_{1}-\mathcal{W}^-_{1}) \cdot \Xi =
        \mathcal{N}^+ 
-
        \mathcal{N}^-  
\end{split},
\end{gather}
where $\mathcal{W}^+_{1}$, $\mathcal{W}^-_{1}$, $\mathcal{N}^+$ and $\mathcal{N}^-$ all contain nonnegative elements. Thus, we introduce the net input species pair $\{N^+_{il},N^-_{il}\}$, the weight species pair $\{W^+_{q}, W^-_{q}\}$ and the bias species pair $\{B^+_{i}, B^-_{i}\}$ with $i=1,2$, $l=1,...,\tilde{p}$ and $q=1,...,4$, and construct the MAS  
\begin{gather}\label{dy:n=w_1x}
\mathcal{O}^1_7:~~~ 
    \begin{aligned}
    W^{\pm}_{i} + S^l_1 & \stackrel{1}{\longrightarrow} W^{\pm}_{i} + S^l_1 + N^{\pm}_{il}, \\ 
     W^{\pm}_{i+2} + S^l_2 & \stackrel{1}{\longrightarrow} W^{\pm}_{i+2} + S^l_2 + N^{\pm}_{il}, \\ 
     B^{\pm}_{i}  & \stackrel{1}{\longrightarrow}  B^{\pm}_{i} +N^{\pm}_{il}, \\
     N^{\pm}_{il}& \stackrel{1}{\longrightarrow} \varnothing. 
\end{aligned}
\end{gather}
Note that the phase identification $\mathcal{O}^1_7$ with the superscript ``1" to represent the first part reaction system controlled by the oscillatory signal $O_7$, and so on. The reaction rate constant is set as $1$ but it can be adjusted as needed actually. The dynamics of \eqref{dy:n=w_1x} thus follows  
\begin{align} \label{eq:21}
    \dot {n}^+ (t) & =  w^+_{1} (t) s_a (t) - n^+(t), \\ \label{eq:22}
     \dot {n}^- (t) & =  w^-_{1} (t) s_a (t) - n^-(t),
\end{align}
where $s_a(t) = (s_{a_{1\cdot}}(t), s_{a_{2\cdot}}(t), \mathds{1}^{\top})^{\top} \in \mathbb{R}^{3 \times \tilde{p}}_{\geq 0}$,
$n^{\pm}(t) \in \mathbb{R}^{2 \times \tilde{p}}_{\geq 0}$ with $n^{\pm}_{il}(t)$ to denote the concentration of $N^{\pm}_{il}$ and $w^{\pm}_{1} (t) \in \mathbb{R}^{2 \times 3}_{\geq 0}$ is the concentration matrix of weight species. Notably, $w^{\pm}_{1_{1\cdot}}(t)$ denotes the concentration of species $W^{\pm}_{1}, W^{\pm}_{3}, B^{\pm}_{1}$ from the left to the right, and $w^{\pm}_{1_{2\cdot}}(t)$ indicates that of species $W^{\pm}_{2}, W^{\pm}_{4}, B^{\pm}_{2}$, respectively. Catalytic reactions (the first three) contribute to increasing the concentration of $N^{\pm}_{il}$ and the decay reaction (the final one) plays the role of balancing. Obviously, \eqref{eq:21}, \eqref{eq:22} are both GAS with the equilibrium points $\bar{n}^+ =  w^+_{1} (0) s_a (0), \bar{n}^- =  w^-_{1} (0) s_a (0)$. Since the oscillator separates reaction modules, $s_{a_1\cdot}(0), s_{a_2\cdot}(0)$ refer to the equilibrium concentrations of species $\{S^l_1, S^l_2\}^{\tilde{p}}_{l=1}$ in phase $\mathcal{O}_1$, which stores the input sample values in the current iteration. Also, let $w^+_{1_{ij}}(0) = \mathcal{W}_{1_{ij}}, w^-_{1_{ij}}(0) = 0$ if $\mathcal{W}_{1_{ij}} >0$, and $w^+_{1_{ij}}(0) = 0, w^-_{1_{ij}}(0) = - \mathcal{W}_{1_{ij}}$ if $\mathcal{W}_{1_{ij}} < 0$. Under these settings, the lwsBCRN system \eqref{dy:n=w_1x} realizes the linear weighted sum computation for the hidden layer through its equilibrium points.
\subsubsection{sigBCRN}
Having computed the net inputs in each hidden layer neuron, we need to pass the values through the nonlinear activation function to achieve a nonlinearity in the output. Compared to current methods based on approximation, we draw inspiration from \cite{arredondo2022supervised} and \cite{fages2017strong} to calculate the accurate Sigmoid output value by devising the sigBCRN system. 

Considering the reversible one-dimensional autocatalytic reaction $\xymatrix{ P \ar @{ -^{>}}^{1} @< 1pt> [r]& 2P \ar  @{ -^{>}}^{1}  @< 1pt> [l] }$
and its dynamic equations $\dot{p} (t) = p(t)(1-p(t))$, the zero deficiency theorem \cite{feinberg1972complex} shows that this MAS has a unique GAS positive equilibrium one and an unstable equilibrium zero, so the solution trajectory of the system always falls within $\mathbb{R}_{>0}$ if given a positive initial concentration. Moreover, the analytic solution shows that $p(t)$ will be less than 1 before reaching equilibrium when $p(0) < 1$, and, in particular, let $p(0)= \frac{1}{2}$, one can have $p(t) = \frac{1}{1+e^{-t}}$.
But this is not the end since, firstly, we need the concentration of net input species to be the independent variable $x$ that will be converted into a nonlinear output signal through $f(x)$. In this regard, we design net input species $N^{\pm}_{il}$ as the catalyst, hydrolyzing autonomously, of the aforementioned reaction. With that, the equilibrium that can be reached by the reactions relies strongly on the initial concentrations of $N^{\pm}_{il}$, which serves to change the independent variable position from $t$ to net input concentrations. Additionally, biochemical reaction systems can only handle the function computation of positive independent variables. Thus, we consider calculating $f(x) = 1/(1+e^{-x})$ when $n^+_{il}-n^-_{il} > 0$, and $\tilde{f}(x) = 1/(1+e^x)$ when $n^+_{il}-n^-_{il} < 0$. It implies that the value and the sign of $n^+_{il}-n^-_{il}$ should be known in advance.

Thus, to construct sigBRN, we introduce hidden-layer output species pairs $\{P^{+}_{il}, P^-_{il}\}$  with $i=1,2$, $l \in \{1,...,\tilde{p}\}$ and species $Half$ maintaining the concentration of $\frac{1}{2}$. The following reactions occupying two phases are for ``clarifying'' the sign of $n_{il}$ in CRN and presetting initial concentrations of $P^{\pm}_{il}$
\begin{gather}
\mathcal{O}_9:~~~~~
\begin{split}\label{CRNinO_9}
    N^+_{il} + N^-_{il} & \stackrel{k}{\longrightarrow} \varnothing,
\end{split}~~~~~~~~~~~~~~~~~~~~\\
\vspace{0.5 em}
\mathcal{O}_{11}:~~
\begin{split}
    N^{\pm}_{il} +  Half & \stackrel{1}{\longrightarrow} N^{\pm}_{il} + Half + P^{\pm}_{il}, \\ \label{eq:O_911}
    N^{\pm}_{il} + P^{\pm}_{il} & \stackrel{1}{\longrightarrow} N^{\pm}_{il}.
\end{split}
\end{gather}
Here, the annihilation reaction \eqref{CRNinO_9} serves to compute the difference, and intuitively speaking, the fixed point $\bar{n}^+_{il}=n_{il}$ if $n_{il} > 0 $ and $\bar{n}^l_{in}=-n_{il}$ in another case after $\mathcal{O}_9$. Owing to the reaction mechanism designed in $\mathcal{O}_{11}$, only the net input species $N^+_{il}$ or $N^-_{il}$ with nonzero initial concentrations can trigger the reactions involving the related species $Half, N^{\pm}_{il}, P^{\pm}_{il}$ in phase $\mathcal{O}_{11}$. Then, the following reaction systems were built occurring in phase $\mathcal{O}_{13}$ to calculate the output $P^+_{il}, P^-_{il}$
\begin{gather}
\mathcal{O}_{13}:
\begin{split}\label{eq:SIG_p}
\xymatrix{ N^+_{il} + P^+_{il} \ar  @{ -^{>}}^{1} @< 1pt> [r] & 2P^+_{il} + N^+_{il}  \ar  @{ -^{>}}^{1}  @< 1pt> [l] },~
N^+_{il} \stackrel{1}{\longrightarrow} \varnothing, ~~~
\end{split} 
\\
\begin{split}\label{eq:SIG_n}
N^-_{il} + 2P^-_{il} & \stackrel{1}{\longrightarrow} 3P^-_{il} + N^-_{il}, \\
N^-_{il} + P^-_{il} & \stackrel{1}{\longrightarrow}  N^-_{il}  \stackrel{1}{\longrightarrow} \varnothing
\end{split}
\end{gather}
for all samples $l$. The system designed above is composed of two uncoupled subnetworks without dynamical interference. The MAS \eqref{eq:SIG_p} is designed to realize $\frac{1}{1+e^{-n^+_{1l}(0)}}$, and \eqref{eq:SIG_n} is a slightly revised version of \eqref{eq:SIG_p} to compute $\frac{1}{1+e^{n^-_{1l}(0)}}$. Recall that, after $\mathcal{O}_{11}$, initial conditions in phase $\mathcal{O}_{13}$ will become $p^+_{il}(0)=\frac{1}{2}, p^-_{il}(0)=0$ if $n_{il}>0$ and $p^+_{il}(0)=0, p^-_{il}(0)=\frac{1}{2}$ if $n_{il}<0$. Under these settings, in phase $\mathcal{O}_{13}$, \eqref{eq:SIG_p} will occur only when the net input $n_{il}$ take positive values by which we could get $f(n_{il})$ in the equilibrium of $P^+_{il}$, and meanwhile $P^-_{il}$ is going to maintain zero concentration in this phase. If $n_{il} < 0$, \eqref{eq:SIG_n} is initiated instead of \eqref{eq:SIG_p} to provide $\tilde{f}(-n_{il})$ in the equilibrium of $P^-_{il}$. In the end, to use one species to denote the final outputs in the hidden layer in either case, the following reactions are added in phase $\mathcal{O}_{13}$ 
\begin{equation}\label{eq:fO_13}
\begin{split}
P^+_{il} \stackrel{1}{\longrightarrow} P^+_{il} + P^l_{i},~~
P^-_{il} \stackrel{1}{\longrightarrow} P^-_{il} + P^l_{i}, ~~
P^l_{i} \stackrel{1}{\longrightarrow} \varnothing,
\end{split}
\end{equation}
where, at least, one of the initial concentrations of $P^+_{il}$ and $P^-_{il}$ must be zero. Therefore, the sigBCRN system \eqref{CRNinO_9}- \eqref{eq:fO_13} implements the Sigmoid function computation for real independent variables. Our design is confirmed quantitatively by the dynamic behavior analysis in the next section. Then, regarding the hidden-output layer, we further introduce new species pairs $\{N^+_{3l}, N^-_{3l}\}$, $\{W^{+}_q, W^{-}_q\}$, and $\{B^+_3, B^-_3\}$ with $q=5,6, l=1,\cdots,\tilde{p}$ and give the following MAS without illustration.
\begin{gather}\label{dy:n=w_2x}
\mathcal{O}^1_{15}:~~~
    \begin{split}
    W^{\pm}_{5} + P^l_{1} & \stackrel{1}{\longrightarrow} W^{\pm}_{5} + P^l_{1} + N^{\pm}_{3l}, \\ 
     W^{\pm}_{6} + P^l_{2} & \stackrel{1}{\longrightarrow} W^{\pm}_{6} + P^l_{2} + N^{\pm}_{3l}, \\ 
     B^{\pm}_{3}  & \stackrel{1}{\longrightarrow}  B^{\pm}_{3} +N^{\pm}_{3l}, \\
     N^{\pm}_{3l}& \stackrel{1}{\longrightarrow} \varnothing. 
\end{split}
\end{gather}

Consequently, applying the sigBCRN design on species $\{N^{\pm}_{3l}, Half, Y^{\pm}_l, Y^l\}^{\tilde{p}}_{l=1}$, the $y_l$ in FCNN is exhibited by the equilibrium concentration $\bar{y}_l$ of the output species $Y^l$.

\subsection{Judgment Module}
One tough problem of using BCRNs to program the learning process based on the gradient descent is the compilation of derivative operators, $ \frac{\partial \mathcal{E}}{\partial  \mathcal{W}_{1_{ij}}}, \frac{\partial \mathcal{E}}{\partial  \mathcal{W}_{2_{1j}}}$ with $i=1,2$, $j=1,2,3$. Our method is to compute the backpropagation formula \eqref{eq:weight_update} derived from the chain rule. Notably, due to $\frac{\partial f}{\partial n_{3l}}=y_l(1-y_l), \frac{\partial f}{\partial n_{il}}=\tilde{\Upsilon}_{il}(1-\tilde{\Upsilon}_{il})$, some of the factors in \eqref{eq:weight_update} like $\mathcal{W}_{1_{ij}}, \mathcal{W}_{2_{1j}}, \Xi_{il}, y_l, \tilde{\Upsilon}_{il}$ have been already computed and stored in the corresponding species, while there are no species concentrations to carry other signals that need to be subtracted in advance, i.e., $e_l, 1-\tilde{\Upsilon}_{il}, 1-y_l$. Therefore, this section first involves the reaction network design for \textit{Preceding Calculation Module} with the attempt to store such signals and prepare for the subsequent learning part. Actually, this kind of pre-computing skill is quite ubiquitous in computers to enhance computation efficiency. 

We introduce species set $\{Y^l_e, Y^l_s, \tilde{Y}^l, I^l_y\}$, $\{P^l_{is}, \tilde{P}^l_i, I^l_{p_i}\}$, $\{S^l_{y}, S^l_{p_i}\}$, $\{E^+_l, E^-_l, E_l\}$ for $l \in \{1,\cdots,\tilde{p}\}$ and device the pre-calculation BCRN (pBCRN) as follows
\begin{gather}
\mathcal{O}^1_{23} ~~~~ 
\left\{~~
\begin{split} \label{eq:bound1}
        Y^l & \stackrel{k}{\longrightarrow} Y^l_e + Y^l_s +\tilde{Y}^l \\
        Y^l_e  + S^l_3 & \stackrel{k}{\longrightarrow} \varnothing \\
         Y^l_s  + I^l_y  & \stackrel{k}{\longrightarrow} \varnothing \\
    P^l_i & \stackrel{k}{\longrightarrow} P^l_{is} +\tilde{P}^l_i \\
    P^l_{is} + I^l_{p_i} &\stackrel{k}{\longrightarrow} \varnothing, \\
\end{split}~~~~
\right.~~~
\\
\mathcal{O}^2_{23} ~~~~
 \left\{~~
\begin{split} \label{eq:bound2}
        S^l_3  & \stackrel{1}{\longrightarrow} E^+_l + S^l_3, ~~ E^+_l \stackrel{1}{\longrightarrow} \varnothing \\
        Y^l_e & \stackrel{1}{\longrightarrow} E^-_l + Y^l_e, ~~ E^-_l \stackrel{1}{\longrightarrow} \varnothing \\
        I^l_y & \stackrel{1}{\longrightarrow} S^l_y + I^l_y, ~~S^l_y \stackrel{1}{\longrightarrow} \varnothing \\
    I^l_{p_i} & \stackrel{1}{\longrightarrow} S^l_{p_i} + I^l_{p_i},~~S^l_{p_i} \stackrel{1}{\longrightarrow} \varnothing,
\end{split}~~~~
\right.
\\
\mathcal{O}^3_{23} ~~~~ 
 \left\{~~
\begin{split} \label{eq:bound3}
E^+_{l} & \stackrel{1}{\longrightarrow} E^+_{l} + E_l \\
E^-_{l}  & \stackrel{1}{\longrightarrow} E^-_{l} + E_l \\
E_l & \stackrel{1}{\longrightarrow} \varnothing.
\end{split}~~~~
\right.~~~~~~~~~~~~~~~
\end{gather}
Here, the concentration of species $I^l_y, I^l_{p_i}$ should both maintain constant one. The first three reactions of \eqref{eq:bound1} are designed to measure the difference between output and label $d^l$ as well as between output and one. Specifically, the first reaction with boundary equilibrium points performs a replication operation to assign the concentration of $Y_l$ to $ Y^l_e, Y^l_s, \tilde{Y}^l$. Here, $Y^l_e, Y^l_s$ are used in the subsequent two annihilation reactions for subtraction, and output concentrations could be stored in $\tilde{Y}^l$ for backpropagation. A similar explanation applies to the last two reactions of \eqref{eq:bound1}. The MAS \eqref{eq:bound2} comprises four group catalytic-hydrolysis reactions intending to assign the catalyst concentration to that of degraded species, and so does as \eqref{eq:bound3}.

The pBCRN system works in the following way. Two possible initial concentration relationships between $y_l(0)$ and $s_{3l}(0)$, i.e., $y_l(0) > s_{3l}(0)$ and $y_l(0) < s_{3l}(0)$, relates to the sign of training errors $e_l=d^l-y_l$ in one iteration. Correspondingly, the positive training error is provided in the equilibrium concentrations of species $S^l_3, E^+_l$, the negative one is stored in $Y^l_e, E^-_l$, and species $E_l$ can represent the training error result in either case. Additionally, \eqref{eq:bound1}-\eqref{eq:bound2} works in the same way for computing $1-y_l, 1-\tilde{\Upsilon}_{il}$ that are only stored in equilibrium concentrations of species $S^l_y, S^l_{pi}$ since the $y_l(0), p^l_i(0)$ is always smaller than one due to the Sigmoid output in sigBCRN.



Translating the judgment to biochemical reaction network language corresponds to determining whether the biochemical learning process carried out by follow-up reactions will happen. This part utilizes a bistable biochemical reaction system to perform the \textit{Judgment Module}. Upholding the low-complexity design principles, we use the minimal 2-dimensional bistable MAS, proposed by Wilhelm\cite{wilhelm2009smallest}, with four reactions to assist in our design
\begin{gather}
\mathcal{O}^1_{25}:~~~~ 
\begin{split}\label{eq:bistable}
    A_l & \stackrel{k_1}{\longrightarrow} 2E_l , ~~ 2E_l \stackrel{k_2}{\longrightarrow} E_l+A_l,\\
    E_l+A_l  &\stackrel{k_3}{\longrightarrow} A_l, ~~
    E_l \stackrel{k_4}{\longrightarrow} \varnothing.
\end{split}
\end{gather}
Note that $E_l$ is the error species also participating in pBCRN in phase $\mathcal{O}_{23}$, which serves as input species in this module. Hopf bifurcation can be excluded as the trace of the Jacobian matrix is always negative, and the MAS \eqref{eq:bistable} is dissipative\cite{wilhelm2009smallest}, whose dynamical behavior depends on its rate constants, i.e., there is no positive equilibrium when $D=k_1k_2-4k_3k_4<0$, a saddle-node bifurcation will occur at $D=0$, and the system exhibits bistability if $D>0$. Given specific parameters satisfying $D>0$, the system has three equilibrium points, $(\bar{e}_{l_1}, \bar{a}_{l_1}) = (0,0)$, $(\bar{e}_{l_2}, \bar{a}_{l_2}) = (\frac{k_1k_2-\sqrt{k_1k_2(k_1k_2-4k_3k_4)}}{2k_2k_3},\frac{k_2}{k_1}(\bar{e}_{l_2})^2),~(\bar{e}_{l_3}, \bar{a}_{l_3})=(\frac{k_1k_2+\sqrt{k_1k_2(k_1k_2-4k_3k_4)}}{2k_2k_3}, \frac{k_2}{k_1}(\bar{e}_{l_3})^2)$, which satisfy $\bar{e}_{l_1} < \bar{e}_{l_2} < \bar{e}_{l_3}$. Lyapunov's first method can be used to prove that the first and the third are locally asymptotic stable, and the second is unstable. The following reactions are added so that species $C_a$ inherit the concentration sum of $e_l$ for all input samples and act as the catalysts of subsequent CRNs.  
\begin{gather}
\mathcal{O}^2_{25}:~~~~
\begin{split}\label{eq:add17}
    E_l \stackrel{k}{\longrightarrow} E_l + C_a,~ C_a \stackrel{k}{\longrightarrow} \varnothing,~\forall l \in \{1,...,\tilde{p}\}.    
\end{split}
\end{gather}

We call the  MAS \eqref{eq:bistable} and $\eqref{eq:add17}$ together judgment BCRN (jBCRN) system. It works in the premise that the $\bar{e}_{l_2}$ is set to the preset training threshold by adjusting rate constants and regarding $e_l(0)$ as the absolute value of the error. In this way, if the initial conditions of species are perturbed upwards at $(\bar{e}_{l_2}, \bar{a}_{l_2})$, the system travels to the third value; if they are perturbed downwards, it settles to $(0,0)$. Consequently, only if $\vert e_l \vert$ of all input samples in the current round meet the requirement, the concentration of $C_a$ will reach the zero equilibrium, and the following update reactions will not happen. Thus, the jBCRN system implements the judgment module under these parameter selections. 

We construct the biochemical \textit{Loop module} through chemical oscillators for our current task that attempts to mimic iterative loops in Fig.\ref{fig:1} while running an algorithm if the judgment condition is not achieved. Due to the periodical changes in concentrations of catalyst species, the chemical oscillator can not only control all modules to satisfy sequential execution but also force modules to carry out multiple times, which enables many iterations. Obviously, it is easier and more practical for fully automatic implementation compared to the cell-like compartment used in our previous work \cite{FAN20227}.

\subsection{Learning Module}
Here are two main steps for updating weights using CRNs, i.e., first programming the backpropagation formula for each weight and then getting the new weight species concentrations for the next iteration. This section provides the design of the \textit{Learning Module} composed of \textit{negative gradient} BCRN (ngBCRN) and \textit{update} BCRN (uBCRN), which plays the main role in equipping the biochemical neural networks with learning abilities.

The derivative computation is divided into multiplication and the sum of multiplication concerning samples according to \eqref{eq:weight_update}. 
Common factors existing in the multiplication part for different weights contribute to our realization. In our case, updating nine weights $\mathcal{W}$ produces four types of multiplication formulas. Different types are composed of different numbers of factors, and deeper weight formulas have fewer terms. Specifically, according to the chain rule, $e_{l}y_l(1-y_l)$ for $\mathcal{W}_{2_{13}}$ (3-item) is a global common factor that appears in every formula, and $\tilde{\Upsilon}_{1l}, \tilde{\Upsilon}_{2l}$ is multiplied for $\mathcal{W}_{2_{11}}, \mathcal{W}_{2_{12}}$ (4-item). When error is propagated to the input-hidden connection, $(1-\tilde{\Upsilon}_{1l}) \mathcal{W}_{2_{11}}, (1-\tilde{\Upsilon}_{2l}) \mathcal{W}_{2_{12}}$ will be multiplied again for $\mathcal{W}_{1_{13}}$ and $\mathcal{W}_{1_{23}}$ (6-item), respectively, and $\Xi_{1l}, \Xi_{2l}$ are finally considered for $\mathcal{W}_{1_{11}},\mathcal{W}_{1_{21}}$ and $\mathcal{W}_{1_{12}}, \mathcal{W}_{1_{22}}$ (7-item), respectively. Furthermore, the \textit{dual-rail encoding} for $e_l, \mathcal{W}_2$ will complicate the process. 
Here, we propose the design method, called \textit{Split Factor - Multiplication} (SFP), for computing the multiplication consisting of any number of factors using a MAS consisting of reactions with no more than two reactants. The main computation strategy in SFP refers to combining two factors at a time and giving a combination that produces the fewest intermediate variables based on the aforementioned common-factor rule. To be specific, we first carry out product operation for common two-factors $e^{\pm}_l \tilde{\Upsilon}_{il}, y_l (1-y_l), \mathcal{W}^{\pm}_{2_{1i}} (1-\tilde{\Upsilon_{il}})$ (shaded yellow in Fig.\ref{fig:SFP}), then combine single-factor $\Xi_{il}, e^{\pm}_l$ to perform second multiplication to obtain three-factor and four-factor shared monomials (shaded blue). After this step, we have the multiplication formula for $\mathcal{W}_{2}$ in the hidden-output connection. Finally, the formula for $\mathcal{W}_1$ is computed by the third multiplication. 

\begin{figure}[!t]
\centerline{\includegraphics[width=\columnwidth]{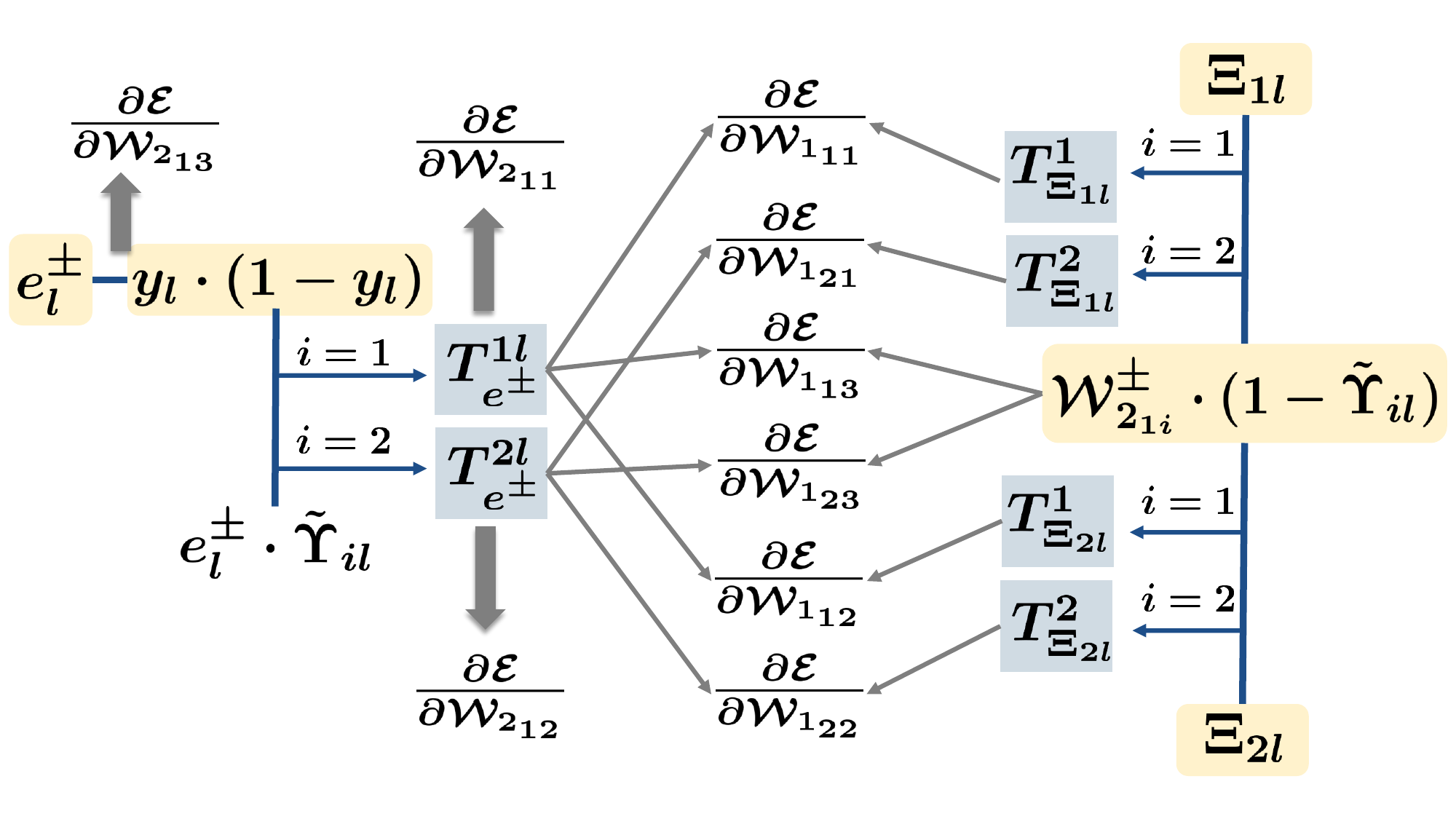}}
\caption{The computation flow diagram and intermediate variables produced of the SFP.}
\label{fig:SFP}
\end{figure}

Thus, given index sets $I_s = \{1,2,3,4,7,8\}$, $I_v = \{5,6,9\}$, the monomial species $\{Q^l_{s \pm \pm}\}_{s\in I_s}$, $\{Q^l_{v \pm}\}_{v \in I_v}$, the intermediate species $\{M^{a,\pm}_{l}, M^l_{y}, M^{l}_{b,\pm}\}$, $\{T^{a,\pm}_{l}, T^{l}_{b,\pm}\}$ with $a, b= \{1,2\}$ for $\forall l \in \{1,2,\cdots,\tilde{p}\}$ and gradient species pairs $\{Par^+_i, Par^-_i\}^9_{i=1}$ are introduced here, and then we exhibit the ngBCRN based on SFP by computing $-\frac{\partial \mathcal{E}}{\partial \mathcal{W}_{1_{11}}}$. Defining $par^{\pm}_1 \in \mathbb{R}^{2 \times 3}$, $par^{\pm}_2 \in \mathbb{R}^{1 \times 3}$ and using the \textit{dual-rail encoding}, we have
\begin{equation*}
    -\frac{\partial \mathcal{E}}{\partial  \mathcal{W}_{1_{11}}} = par^+_{1_{11}}- par^-_{1_{11}},
\end{equation*}
where 
\begin{gather*}
\begin{split}
    par^+_{1_{11}} & = \sum^{\tilde{p}}_{l=1} (e^+_l \mathcal{W}^+_{2_{11}} + e^-_l \mathcal{W}^-_{2_{11}}) y_l (1-y_l) \tilde{\Upsilon}_{1l} (1-\tilde{\Upsilon}_{1l}) \Xi_{1l} \\
par^-_{1_{11}} & = \sum^{\tilde{p}}_{l=1} (e^+_l \mathcal{W}^-_{2_{11}} + e^-_l \mathcal{W}^+_{2_{11}}) y_l (1-y_l) \tilde{\Upsilon}_{1l} (1-\tilde{\Upsilon}_{1l}) \Xi_{1l}.
    \end{split}
\end{gather*}
It involves computing $4\tilde{p}$ seven-factor monomials and requires species $\{Q^l_{1\pm\pm}\}$ and intermediate species of $a=1, b=1$. The multiplication computation when combining $e^+_l$ and $\mathcal{W}^+_{2_{11}}$ is implemented by designing the following MAS 
\begin{gather*}
\mathcal{O}^1_{27}:
\begin{split}
E^+_l+ P^l_{1} & \stackrel{1}{\longrightarrow} E^+_l + P^l_1 + M^{1,+}_{l}, ~~
M^{1,+}_{l} \stackrel{1}{\longrightarrow} \varnothing, \\
\tilde{Y}^l+ S^l_y & \stackrel{1}{\longrightarrow} \tilde{Y}^l+ S^l_y + M^{l}_{y}, ~~
M^{l}_{y} \stackrel{1}{\longrightarrow} \varnothing,\\
S^l_{p_1}+ W^+_5 &\stackrel{1}{\longrightarrow} S^l_{p_1}+ W^+_5 + M^{l}_{1,+},  ~~
M^{l}_{1,+} \stackrel{1}{\longrightarrow} \varnothing,\\
M^{1,+}_{l} + M^{l}_{y}& \stackrel{1}{\longrightarrow} M^{1,+}_{l} + M^{l}_{y} +T^{1,+}_{l},~~
T^{1,+}_{l}  \stackrel{1}{\longrightarrow} \varnothing,\\
M^{l}_{1,+} + S^l_1 & \stackrel{1}{\longrightarrow} M^{l}_{1,+} + S^l_1 + T^{l}_{1,+},~~
T^{l}_{1,+} \stackrel{1}{\longrightarrow} \varnothing,\\
T^{1,+}_{l} +T^{l}_{1,+} & \stackrel{1}{\longrightarrow} T^{1,+}_{l} +T^{l}_{1,+} + Q^l_{1 + +},~~ 
Q^l_{1++} \stackrel{1}{\longrightarrow} \varnothing
\end{split}
\end{gather*}
and the result is stored in $Q^l_{1++}$. A similar CRN design structure is applied in generating species $Q^l_{1+-}, Q^l_{1-+}, Q^l_{1--}$ with replacing the error species, weight species, and the intermediate species positions by $\{E^+, M^{1,+}_{l}, T^{1,+}_{l}, W^-_5, M^{l}_{1,-}, T^{l}_{1,-}\}$,  $\{E^-, M^{1,-}_{l}, T^{1,-}_{l}, W^+_5, M^{l}_{1,+}, T^{l}_{1,+}\}$ and  $\{E^-, M^{1,-}_{l}, T^{1,-}_{l},$ $W^-_5, M^{l}_{1,-}, T^{l}_{1,-}\}$, respectively. The CRNs in $\mathcal{O}^1_{27}$ for calculating negative gradient values concerning $\mathcal{W}_{1_{12}}$, $\mathcal{W}_{1_{21}}$, $\mathcal{W}_{1_{22}}$ have the same design as $\mathcal{W}_{1_{11}}$ but need the index $\{a,b\}$ of intermediate species set being $\{1,2\}$, $\{2,1\}$ and $\{2,2\}$, to finally get $\{Q^l_{3\pm\pm}\}$,  $\{Q^l_{2\pm\pm}\}$, $\{Q^l_{4\pm\pm}\}$,
respectively. Regarding $\mathcal{W}_{1_{13}}, \mathcal{W}_{1_{23}}$, we devise the new combination between $\{T^{1,\pm}_l\}$ and $\{M^l_{1,\pm}\}$, $\{T^{2,\pm}_l\}$ and $\{M^l_{2,\pm}\}$ to consequently produce $\{Q^l_{7\pm\pm}\}$ and $\{Q^l_{8\pm\pm}\}$, respectively. As for $\mathcal{W}_2$, it requires us to add only one new combination of species $E^{\pm}_l$ and $\{M^l_y\}$ to have new species $\{Q^l_{9\pm}\}$ that relates to $\mathcal{W}_{2_{13}}$ because the already existing $\{T^{1,\pm}_l\}$, $\{T^{2,\pm}_l\}$ can be regarded as $\{Q^l_{5\pm}\}$, $\{Q^l_{6\pm}\}$ for $\mathcal{W}_{2_{11}}$ and $\mathcal{W}_{2_{12}}$, separately. In the sum part, we devise reactions 
$\mathcal{O}^2_{27}$ below to calculate $par^{\pm}_1, par^{\pm}_2$
\begin{equation*}
\begin{array}{c|c}
\begin{aligned}
Q^l_{s++} & \stackrel{1}{\longrightarrow}
Q^l_{s++}  + Par^+_{s},\\
Q^l_{s--} & \stackrel{1}{\longrightarrow}
Q^l_{s--} + Par^+_{s},\\
Q^l_{v+} & \stackrel{1}{\longrightarrow}
Q^l_{v+} + Par^+_{v},\\
 Par^+_{s} & \stackrel{1}{\longrightarrow} \varnothing, Par^+_{v} \stackrel{1}{\longrightarrow} \varnothing,
\end{aligned}
&
\begin{aligned}
Q^l_{s+-} & \stackrel{1}{\longrightarrow}
Q^l_{s+-}  + Par^-_{s},\\
Q^l_{s-+} & \stackrel{1}{\longrightarrow}
Q^l_{s-+} + Par^-_{s},\\
Q^l_{v-} & \stackrel{1}{\longrightarrow}
Q^l_{v-} + Par^-_{v},\\
Par^-_{s} & \stackrel{1}{\longrightarrow} \varnothing, Par^-_{v} \stackrel{1}{\longrightarrow} \varnothing
\end{aligned}
\end{array}
\end{equation*}
with $s \in I_s$, $v\in I_v$. Following our design, all shared intermediate variables are calculated once, stored, and ready for arbitrary use, so the SFP-based design could be extended to the FCNN of any depth at a lower computational cost. 

Having known the negative gradient of $\mathcal{E}$ with respect to every weight, we consequently intend to get the change quantity $\Delta \mathcal{W}^m$ and the new weight $\mathcal{W}^{m+1}$ for the next iteration. The iterative formula \eqref{eq:GD} is rewritten as
\begin{gather}  \label{eq:new weight}
\begin{split}
     \mathcal{W}^{m+1}
     & = (\mathcal{W}^{m,+} + \eta \cdot par^{+})- (\mathcal{W}^{m,-} + \eta \cdot par^{-})\\
     & = \mathcal{W}^{m+1,+} - \mathcal{W}^{m+1,-}
\end{split}
\end{gather}
where $par^{\pm}=((par^{\pm}_1 )^{\top},(par^{\pm}_2)^{\top})^{\top}$.
To program \eqref{eq:new weight}, we add incremental weight species pairs $\{\Delta W^{+}_i, \Delta W^{-}_i\}^6_{i=1}$, incremental bias species pairs $\{\Delta B^{+}_j, \Delta B^{-}_j\}^3_{j=1}$, old weight-bias species $\{\Gamma^+_i, \Gamma^-_i\}^9_{i=1}$, and learning rate species $\{L\}$. Then, we design the uBCRN below by adopting the same pattern in the previous work \cite{FAN20227}
\begin{equation*}
\mathcal{O}_{29}:
\begin{split} \label{uBCR_1}
Par^{\pm}_i + L & \xrightarrow{1} Par^{\pm}_i + L +\Delta W^{\pm}_{i},~ 
\Delta W^{\pm}_{i} \xrightarrow{1} W^{\pm}_{i},\\
\Gamma^{\pm}_{i}& \xrightarrow{1} \Gamma^{\pm}_{i} + W^{\pm}_{i},~ 
W^{\pm}_i \xrightarrow{1} \varnothing,\\
Par^{\pm}_{j+6} + L & \xrightarrow{1} Par^{\pm}_{j+6} + L +\Delta B^{\pm}_{j},~ 
\Delta B^{\pm}_{j} \xrightarrow{1} B^{\pm}_{j},\\
\Gamma^{\pm}_{j+6}& \xrightarrow{1} \Gamma^{\pm}_{j+6} + B^{\pm}_{j},~ 
B^{\pm}_j \xrightarrow{1} \varnothing.
    \end{split}~~~~~~~~~
\end{equation*}
with $i \in \{1,\cdots,6\}$, and $j=\{1,2,3\}$.
\begin{remark}
The natural isolation device, the oscillator, helps us to make $\{\Gamma^{\pm}_i\}^9_{i=1}$ carry the weights and bias of the current iteration in a much simpler way than before \cite{FAN20227} by the following reactions added in phase $\mathcal{O}_7$
\begin{equation*}
\mathcal{O}^2_{7}:~~
\begin{array}{c|c}
    \begin{split}
W^{\pm}_{i} & \xrightarrow{1} \Gamma^{\pm}_{i} + W^{\pm}_{i},\\
 \Gamma^{\pm}_{i} & \xrightarrow{1} \varnothing,  
    \end{split}
         &
    \begin{split}
B^{\pm}_{j} & \xrightarrow{1} \Gamma^{\pm}_{j+6} + B^{\pm}_{j},\\
 \Gamma^{\pm}_{j+6} & \xrightarrow{1} \varnothing,
    \end{split}
    \end{array}~~
\end{equation*} 
\end{remark}
which transfers the initial concentrations of $\{W^{\pm}_i\}^6_{i=1}$ and $\{B^{\pm}_{j=1}\}^3_{j=1}$ of this period to $\{\Gamma^{\pm}_i\}^9_{i=1}$. Apart from that, we set $l(0) = \eta$ and give any positive initial concentration of incremental species, the uBCRN realizes the weight update process for $k$th training round.


\subsection{Clear-out Module}
Until now, we have shown all the core module designs of the biochemical neural network. However, they do not fully satisfy the needs of an automated implementation of FCNN since some given initial concentration conditions are required in each iteration. We divide them into two categories: 1) the initial concentration should be completely emptied; 2) the initial concentration should be any fixed positive value. Therefore, in our design, some species concentrations in the sigBCRN and pBCRN need to be revised in the final module. 
\subsubsection{Nonlinear CRN} The initial concentration of output species pair $\{P^+_{il}, P^-_{il}\}, \{Y^+_l, Y^-_l\}$ should be zero before every iteration to make sure that only one of each pair should be present with concentration $\frac{1}{2}$ before $\mathcal{O}_{13}$. However, in practice, the final equilibrium concentration of these species after each iteration, depending on the actual output results in FCNN, may be unpredictable for us, i.e., some of them might not be zero as intended. Therefore, we give the \textit{clear-out} BCRN in the final phase to consume all their concentrations 
\begin{equation*}
\mathcal{O}^1_{31}:~ P^+_{il} \xrightarrow{1} \varnothing,~P^-_{il} \xrightarrow{1} \varnothing,~Y^+_{l} \xrightarrow{1} \varnothing,~Y^-_{l} \xrightarrow{1} \varnothing. 
\end{equation*}
\subsubsection{Pre-calculationg CRN} All the species comprising the product complex $Y^l_e+Y^l_s+\tilde{Y}^l, P^l_{is}+\tilde{P}^l_{is}$ should be none in the reactor for target equilibrium concentration which depends on initial concentrations closely in our construction. Species $I^l_y, I^l_{p_i}$ should be constant one before every iteration for subtraction. But the practical situation is that $I^l_y, I^l_{p_i}$ must be consumed every iteration and the others may have residual concentrations. Thus, we give both the \textit{clear-out} BCRN and the \textit{compensatory} BCRN (far right)
\begin{equation*}
\mathcal{O}^2_{31}:
\begin{array}{c|c|c}
\begin{split}
    Y^l_{s} \xrightarrow{1} \varnothing, \\
    \tilde{Y}^l \xrightarrow{1} \varnothing, \\
    Y^l_{e} \xrightarrow{1} \varnothing, 
\end{split}
     &
\begin{split}
    P^{\pm}_{il} \xrightarrow{1} \varnothing, \\
    \tilde{P}^l_{i} \xrightarrow{1} \varnothing, 
\end{split}
&
\begin{split}
   I & \xrightarrow{1}I + I^l_{y}, \\
   I  & \xrightarrow{1}  I + I^l_{p_i},\\
I^l_{y} & \xrightarrow{1} \varnothing,~I^l_{p_i} \xrightarrow{1} \varnothing,
\end{split}
\end{array}
\end{equation*}
where $I$ is a unit species with the initial concentration one.

\section{Convergence Analysis of the Computation Reaction modules}
\label{sec:dynamic analysis}
Utilizing such a realization strategy, the asymptotic stability and convergence speed of the designed biochemical reaction systems are significant factors in determining the performance of biochemical neural networks. In this section, a rigorous stability analysis of the computational modules built above will be carried out to demonstrate that all computation modules have exponential convergence properties, which ensures that the BFCNN system can approach the final results at a very fast rate and has a convergent training process.

\subsection{Feedforward Propagation Module}
The asymptotic stability of the assignment module is shown in Proposition \ref{prop.1} and the lwsBCRN system has been shown to be a GAS system for given parameters. Here, we still study reactions of the sigBCRN system occurring in different phases separately. The following lemma is first given to show the exponential convergence of the reaction system in $\mathcal{O}_9$.
\begin{lemma} \label{lemma_O9}
Suppose that $n_{il}(t) =(n^+_{il}(t), n^-_{il}(t))^{\top}, c=n^-_{il}(0)-n^+_{il}(0), a = \frac{n^+_{il}(0)}{n^-_{il}(0)}$. There exists $ M_1 > \frac{c}{1-a}, \alpha_1 \in (0,-c)$ such that the MAS \eqref{CRNinO_9} converges exponentially to the unique boundary equilibrium $(\bar{n}_{il})^1 = (n^+_{il}(0)-n^-_{il}(0),0)$ in any compatibility class $\{(x,y) \in \mathbb{R}^2_{\geq 0}|y=x+b\}$ for $\forall b \in \mathbb{R}_{\geq 0}$, and also $\exists ~ M_2 > \frac{ac}{1-a}, \alpha_2 \in (0,c)$ such that it converges exponentially to the unique equilibrium $(\bar{n}_{il})^2 = (0,n^-_{il}(0)-n^+_{il}(0))$ in the compatibility class $\{(x,y) \in \mathbb{R}^2_{\geq 0}|y=x+b\}$ for $\forall b \in \mathbb{R}_{<0}$.
\end{lemma}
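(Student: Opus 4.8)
The plan is to collapse the planar mass-action dynamics of \eqref{CRNinO_9} onto its invariant compatibility class and then solve the resulting scalar equation in closed form. First I would write the mass-action ODEs: the single annihilation reaction consumes both species at the common rate $k\,n^+_{il}n^-_{il}$, so $\dot n^+_{il}=\dot n^-_{il}=-k\,n^+_{il}n^-_{il}$. Hence the difference $n^+_{il}(t)-n^-_{il}(t)$ is a conserved quantity, fixed at $n^+_{il}(0)-n^-_{il}(0)=-c$; this is precisely the statement that each trajectory remains on the affine line $\{y=x+b\}$ defining its compatibility class. The sign of $c$ separates the two cases of the lemma: $c<0$ (i.e.\ $n^+_{il}(0)>n^-_{il}(0)$) annihilates $N^-_{il}$ and leaves the excess $n^+_{il}(0)-n^-_{il}(0)$ in $N^+_{il}$, producing the boundary equilibrium $(\bar n_{il})^1$; the case $c>0$ is the mirror image and yields $(\bar n_{il})^2$.

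For the first case I would use the conserved relation $n^+_{il}=n^-_{il}+d$ with $d:=-c>0$ to eliminate $n^+_{il}$, reducing the system to the scalar logistic equation $\dot u=-k\,u(u+d)$ for $u:=n^-_{il}$. This is separable and integrates to $u(t)=\dfrac{dA\,e^{-kdt}}{1-A\,e^{-kdt}}$ with $A:=n^-_{il}(0)/n^+_{il}(0)\in(0,1)$. Since $A\,e^{-kdt}\le A<1$ for $t\ge0$, the denominator is bounded below by $1-A$, giving the uniform estimate $u(t)\le\dfrac{dA}{1-A}\,e^{-kdt}$. A short computation identifies $\dfrac{dA}{1-A}=n^-_{il}(0)=\dfrac{c}{1-a}$, so $n^-_{il}(t)\le\dfrac{c}{1-a}\,e^{-kdt}$.

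Finally I would lift this scalar bound to the planar one. Because $n^+_{il}(t)-(n^+_{il}(0)-n^-_{il}(0))=n^-_{il}(t)$ and $n^-_{il}(t)-0=n^-_{il}(t)$, the $\infty$-norm distance to $(\bar n_{il})^1$ equals $n^-_{il}(t)$, whence $\Vert n_{il}(t)-(\bar n_{il})^1\Vert\le\dfrac{c}{1-a}\,e^{-kdt}$. Thus any $M_1>\dfrac{c}{1-a}$ works, and since the decay constant $kd=-kc$ dominates every $\alpha_1\in(0,-c)$ (the rate constant $k$ being a free design parameter, e.g.\ $k\ge1$), the exponential bound follows. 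The second case is obtained verbatim after swapping $n^+_{il}$ and $n^-_{il}$: the scalar variable becomes $n^+_{il}$ with logistic rate $c$, its prefactor is $n^+_{il}(0)=\dfrac{ac}{1-a}$, and the decay rate is $kc$, giving $M_2>\dfrac{ac}{1-a}$ and $\alpha_2\in(0,c)$.

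I expect the only subtle point to be that the boundary equilibria are non-hyperbolic for the full two-dimensional system—the Jacobian at $(\bar n_{il})^1$ has eigenvalues $0$ and $-kd$, the zero eigenvalue pointing along the conserved direction—so a direct planar linearization cannot certify exponential convergence. Working on the reduced one-dimensional compatibility class removes this degeneracy, and the explicit logistic solution both supplies the sharp constants and keeps the estimate uniform in $t\ge0$ rather than merely asymptotic, which the lower bound $1-A\,e^{-kdt}\ge1-A$ guarantees.
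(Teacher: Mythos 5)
Your proposal is correct and follows essentially the same route as the paper: both reduce the planar annihilation dynamics to the one-dimensional compatibility class via the conserved difference, solve the resulting logistic equation explicitly, and bound the distance $n^-_{il}(t)=\frac{c}{1-ae^{-ct}}$ by a decaying exponential with prefactor $n^-_{il}(0)=\frac{c}{1-a}$ (resp.\ $n^+_{il}(0)=\frac{ac}{1-a}$ in the mirrored case). Your denominator estimate $1-Ae^{-kdt}\geq 1-A$ is a cleaner way to obtain the same constants than the paper's auxiliary function $g(t)$, and your observation about the non-hyperbolic zero eigenvalue along the conserved direction correctly explains why the reduction is needed.
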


Lemma \ref{lemma_O9} shows that \eqref{CRNinO_9} realize computing the net input at a fast speed by regarding $N^+_{il}$ as output signal carriers when $n_{il} > 0$, and using species $N^-_{il}$ if $n_{il} < 0$ in FCNN. Using $p^{\pm}_{il}(t)$ to denote the concentration of species $P^{\pm}_{il}$, the dynamic equations of the MAS \eqref{eq:O_911}
\begin{equation}
\label{eq:O_1129}
\mathcal{O}_{11}:~~\dot{p}^{\pm}_{il}(t) = n^{\pm}_{il} (t) (\frac{1}{2}-p^{\pm}_{il}(t))      
\end{equation}
with $n^{\pm}_{il}(t) = n^{\pm}_{il}(0) \neq 0$ has the unique globally exponentially stable (GES) equilibrium point $\frac{1}{2}$ whose convergence rate depends on $\bar{n}^{\pm}_{il}$ in phase $\mathcal{O}_{9}$, that is, $|p^{\pm}_{il}(t) - \bar{p}^{\pm}_{il}|=|p^{\pm}_{il}(t)-\frac{1}{2}| = |p^{\pm}_{il}(0) - \frac{1}{2}|e^{-n^{\pm}_{il}(0)t}$. If one of $n^{\pm}_{il}(0)=0$, we can have $p^{\pm}_{il}(t) = p^{\pm}_{il}(0)$ for $\forall t \geq 0$. Then, the proposition below implies that MAS \eqref{eq:SIG_p}, \eqref{eq:SIG_n} of sgiBCRN will globally converge exponentially to an equilibrium set in which at least one species concentration maintains at zero, and provides the appropriate initial concentrations range for $P^{\pm}_{il}$ to guarantee that their equilibrium concentration always falls within the range of Sigmoid function.
\begin{proposition}\label{prop_O13}
For all $l = \{1,2,\cdots,\tilde{p}\}$, $i=\{1,2\}$, given nonnegative initial condition $(p^+_{il}(0), n^+_{il}(0)) \in \mathbb{R}^2_{\geq 0}$ and $(p^-_{il}(0), n^-_{il}(0))  \in \mathbb{R}^2_{\geq 0}$, the reaction system \eqref{eq:SIG_p} and \eqref{eq:SIG_n} will globally asymptotically converge to a unique boundary equilibrium set $\mathcal{A}_p = \{(p^+_{il}, n^+_{il})|p^+_{il} \geq 0, n^+_{il} = 0\}$ and $\mathcal{A}_n = \{(p^-_{il}, n^-_{il})|p^-_{il} \geq 0, n^-_{il} = 0\}$, respectively. Moreover, given initial concentrations $p^+_{il}(0), p^-_{il}(0) \in \left(0,1\right)$, there exist $M^l_p = n^+_{il}(0), M^l_n = n^-_{il}(0), \gamma_p = \gamma_n =1$ such that \eqref{eq:SIG_p} and \eqref{eq:SIG_n} can converge exponentially to $A^e_{p}=\{(p^+_{il}, n^+_{il})|p^+_{il} \in \left(0,1\right), n^+_{il} = 0\}$ and $A^e_{n}=\{(p^-_{il}, n^-_{il})|p^-_{il} \in \left(0,1\right), n^-_{il} = 0\}$, respectively.
\end{proposition}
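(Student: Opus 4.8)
The plan is to exploit the fact that in both \eqref{eq:SIG_p} and \eqref{eq:SIG_n} the net-input species $N^{\pm}_{il}$ enter the (auto)catalytic reactions purely as catalysts and are removed only through the decay $N^{\pm}_{il}\to\varnothing$. Writing $p=p^{\pm}_{il}$ and $n=n^{\pm}_{il}$, a direct mass-action balance yields the decoupled pair $\dot n=-n$ and $\dot p=\sigma\, n\, p(1-p)$, where $\sigma=+1$ for \eqref{eq:SIG_p} and $\sigma=-1$ for \eqref{eq:SIG_n}. The $n$-equation integrates explicitly to $n(t)=n(0)e^{-t}$, which already isolates the exponential rate that will appear in the final estimate, so the whole problem reduces to understanding the scalar, non-autonomous logistic equation for $p$.

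First I would record two structural facts about the $p$-equation. Both lines $p=0$ and $p=1$ make the right-hand side vanish identically, so each is invariant; by uniqueness of solutions, $p(0)\in(0,1)$ forces $p(t)\in(0,1)$ for all $t\geq0$. Moreover, since $n(t)>0$, the sign of $\dot p$ is fixed on $[0,1]$ ($\dot p\geq0$ for \eqref{eq:SIG_p}, $\dot p\leq0$ for \eqref{eq:SIG_n}), so $p$ is monotone and bounded, hence convergent. The substitution $u=1/p$ linearises the dynamics to $\dot u=\mp n(u-1)$, which I would integrate using $N(t)=\int_0^t n(s)\,ds=n(0)(1-e^{-t})\to n(0)$ to get $u(\infty)=1+(u(0)-1)e^{\mp n(0)}$; because $N(\cdot)$ stays bounded, this limit is finite and strictly positive, giving $p(\infty)\in(0,1)$.

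For the first claim, combining $n(t)\to0$ with the convergence of $p$ shows that every trajectory with $(p(0),n(0))\in\mathbb{R}^2_{\geq0}$ (read on the invariant band $p\in[0,1]$, which contains the physically relevant preset $p(0)\in\{0,\tfrac12\}$ from $\mathcal{O}_{11}$) approaches a point $(p^{*},0)$, so $\operatorname{dist}(\,\cdot\,,\mathcal{A}_p)\to0$, and symmetrically for $\mathcal{A}_n$. For the exponential claim the key observation is that the target is an entire line of equilibria $A^e_p=(0,1)\times\{0\}$, so convergence to the set is controlled only by the transversal coordinate $n$: since $p(t)\in(0,1)$ by the invariance above, the $\infty$-norm distance from $(p(t),n(t))$ to $A^e_p$ is attained at $(p(t),0)\in A^e_p$ and equals $|n(t)|=n(0)e^{-t}$. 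This gives $\inf_{y\in A^e_p}\lVert(p(t),n(t))-y\rVert\leq M^l_p e^{-\gamma_p t}$ with $M^l_p=n^+_{il}(0)$, $\gamma_p=1$, and the identical argument for \eqref{eq:SIG_n} yields $M^l_n=n^-_{il}(0)$, $\gamma_n=1$.

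The main obstacle is conceptual rather than computational: one must recognise that exponential convergence to a continuum of equilibria cannot come from a strict Lyapunov or linearisation argument, since the tangential ($p$) direction is neutrally stable and only the transversal ($n$) direction decays. The care needed is to guarantee that $p(t)$ never leaves $(0,1)$ — otherwise the matching point $(p(t),0)$ would fall outside the open set $A^e_p$ and the clean distance identity would break — which is exactly what the invariance-by-uniqueness step supplies. A minor caveat to flag is that for \eqref{eq:SIG_n} with $p(0)>1$ the linearised equation can drive $u$ through zero in finite time (finite-time blow-up of $p$), so the global-convergence statement should be understood on the invariant band $p\in[0,1]$ that carries all admissible initial data.
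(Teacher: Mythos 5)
Your proposal is correct, and it reaches the same closed-form information as the paper (the reduction to $\dot n=-n$, $\dot p=\pm n\,p(1-p)$, and $n(t)=n(0)e^{-t}$; your substitution $u=1/p$ reproduces exactly the explicit solution $p^+_{il}(t)=\bigl(1+\tfrac{1-p(0)}{p(0)}e^{(e^{-t}-1)n(0)}\bigr)^{-1}$ that the paper integrates directly), but the decisive step is handled genuinely differently. The paper bounds $\mathrm{dist}(z(t),A^e_p)$ by the distance to the \emph{limit point} $\bar z=(\bar p,0)$, which forces it to estimate $|p(t)-\bar p|\le C_2e^{-t}/(1+C_1e^{-C_2})$ via the inequality $e^{-x}\ge 1-x$ applied to the explicit solution, and then take the max with $|n(t)|=C_2e^{-t}$, ending with $M_p>n(0)$. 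You instead project onto the point $(p(t),0)$, which lies in $A^e_p$ because $(0,1)$ is invariant by uniqueness, so the $\infty$-norm distance to the equilibrium line is \emph{exactly} $|n(t)|=n(0)e^{-t}$. This is shorter, needs no estimate on $p(t)$ at all beyond invariance, and delivers the sharp constant $M^l_p=n^+_{il}(0)$ that the proposition actually states (the paper's own proof only obtains a strict inequality $M_p>n(0)$). Your caveat about \eqref{eq:SIG_n} with $p^-_{il}(0)>1$ is also a genuine observation: the paper's solution formula for that case has a denominator that can cross zero in finite time, so the ``global'' claim over all of $\mathbb{R}^2_{\ge 0}$ is only safe on the band $p\in[0,1]$, which is where all initial data produced by phase $\mathcal{O}_{11}$ actually live; the paper's proof passes over this silently. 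The only thing to make explicit if you write this up is the first (non-exponential) part for general $p(0)\ge 0$ outside $(0,1)$ in the \eqref{eq:SIG_p} case, which your monotonicity-plus-boundedness argument covers with one extra line since $1+C_1e^{(e^{-t}-1)n(0)}\ge 1+C_1=1/p(0)>0$ there.
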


Proposition \ref{prop_O13} provide a theoretical support that as long as the initial concentration of the corresponding output species is fixed at $\frac{1}{2}$, the MAS \eqref{eq:SIG_p} and \eqref{eq:SIG_n} have the limiting steady states of output species to represent the accurate Sigmoid activation output for arbitrary net input. Although \eqref{eqq:Sig_p}, \eqref{eqq:Sig_n} are both degenerate systems only admitting non-isolated boundary equilibrium points in the compatibility class, the attractive regions of every equilibrium can be determined from $\{(p^+_{il0}, n^+_{il0}) | p^+_{il0} = \frac{1}{1+\frac{1-b}{b} e ^ {n^l_{ip0}}}, b = \frac{1}{1+e^{-a}}, \forall a \in \mathbb{R}\}$ and $\{(p^-_{il0}, n^-_{il0}) | p^-_{il0} = \frac{1}{1+\frac{1-b}{b} e ^ {-n^l_{in0}}}, b = \frac{1}{1+e^{a}}, \forall a \in \mathbb{R}\}$.
 
Additionally, to save the running time of BFCNN by decreasing the number of phases, the MAS \eqref{eq:fO_13}, serving to represent target outputs by one species concentration, is embedded as a subnetwork into the MAS in phase $\mathcal{O}_{13}$. This strategy is also adopted in the subsequent design. We can demonstrate through the following lemma \ref{lemma_o13_F} that the asymptotic stability and the exponential convergence rate if any in the unembedded system, could be inherited in the parent reaction network.   
\begin{lemma} \label{lemma_o13_F}
Consider the kinetic equation of MAS $\mathscr{C}$ below 
\begin{equation}
    \dot{s}(t) = g(s(t)),~ ~s_0 =s(0),
\label{eq:lemma21}    
\end{equation}
where $s(t) \in \mathbb{R}^n_{\geq 0}$, and the set $I \subset \{1,2,...,n\}$. Assume that a certain species subset $\{S_i\}_{i \in I}$ of $\mathscr{C}$ has the limiting steady states and admits the exponential convergence, i.e., $\exists ~q_i, \beta_i > 0$ such that $\vert s_i(t) - \bar{s}_i \vert < q_i e^{- \beta_i t}$ for $\forall t \geq 0$. If the MAS $\mathscr{C}^{'}$ (called type-$\Rmnum{1}$ CRN)
\begin{gather}
    \begin{split}
 S_i & \stackrel{1}{\longrightarrow} S_i + H_, ~~ \forall i \in I, \\
   H & \stackrel{1}{\longrightarrow} \varnothing 
    \end{split}
\end{gather}
is embedded as a subsystem into $\mathscr{C}$, then species $H$ in the parent network $\mathscr{C}^{\ast}$ has the limiting steady state $\sum \bar{s}_i$ and will inherit the exponential convergence from $\mathscr{C}^{'}$. Moreover, if the equilibrium points of $S_i$ are asymptotically stable with respect to any finite initial concentrations (not necessarily exponentially) in $\mathscr{C}$, so as the $H$ in $\mathscr{C}^{\ast}$.
\end{lemma}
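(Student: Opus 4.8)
The plan is to exploit the fact that the embedded type-$\Rmnum{1}$ network $\mathscr{C}'$ is \emph{one-way coupled} to the parent: the reactions $S_i \to S_i + H$ are catalytic in each $S_i$ (no net consumption of $S_i$), and $H$ is a freshly introduced species that appears as a reactant only in $H \to \varnothing$. Consequently the balance law of every original species is left unchanged, so in $\mathscr{C}^{\ast}$ the subset $\{S_i\}_{i\in I}$ still obeys its $\mathscr{C}$-dynamics and retains both its limiting steady states $\bar{s}_i$ and the hypothesized exponential bound $|s_i(t)-\bar{s}_i| < q_i e^{-\beta_i t}$. The only new scalar equation is
\begin{equation}
\dot h(t) = \sum_{i\in I} s_i(t) - h(t),
\end{equation}
a scalar linear ODE driven by the now-known trajectories $s_i(t)$.

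First I would identify the candidate limit $\bar h = \sum_{i\in I}\bar s_i$ and pass to the error variable $\tilde h(t) = h(t)-\bar h$, which satisfies $\dot{\tilde h}(t) = -\tilde h(t) + u(t)$ with forcing $u(t) = \sum_{i\in I}(s_i(t)-\bar s_i)$. The exponential hypothesis gives the uniform bound $|u(t)| \le Q e^{-\beta t}$, where $Q = \sum_{i\in I} q_i$ and $\beta = \min_{i\in I}\beta_i > 0$. Solving by variation of constants,
\begin{equation}
\tilde h(t) = e^{-t}\tilde h(0) + \int_0^t e^{-(t-\tau)} u(\tau)\, d\tau,
\end{equation}
and estimating the convolution $\int_0^t e^{-(t-\tau)} e^{-\beta\tau}\, d\tau$ yields $|\tilde h(t)| \le M e^{-\gamma t}$. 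This simultaneously shows that $H$ has limiting steady state $\sum_{i\in I}\bar s_i$ and that it inherits exponential convergence, with rate $\gamma = \min\{\beta,1\}$.

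For the final assertion, when the $S_i$-equilibria are only asymptotically (not exponentially) stable, the forcing satisfies merely $u(t)\to 0$. I would then replace the explicit bound by the standard \emph{vanishing-input} argument: splitting the convolution integral at a time $T$ beyond which $|u(\tau)|<\varepsilon/2$, the tail contributes at most $\varepsilon/2$ while the head is $O(e^{-(t-T)})\to 0$, so $\tilde h(t)\to 0$; Lyapunov stability follows from the bounded-input--bounded-output property of $\dot h = -h + (\cdot)$. The one genuinely delicate point is the resonant case $\beta = 1$, where the convolution produces a factor $t e^{-t}$; this is still exponentially small, but one must then advertise any rate $\gamma < 1$ rather than $\gamma = \beta$. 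Everything else is routine, so the main obstacle is really the bookkeeping step of verifying the decoupling---namely that introducing $\mathscr{C}'$ leaves the $\mathscr{C}$-dynamics of $\{S_i\}$, and hence the values $\bar s_i$ and their convergence rates, intact.
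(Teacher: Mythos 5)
Your proposal is correct and follows essentially the same route as the paper's proof: both reduce to the scalar linear ODE $\dot h = \sum_{i\in I}s_i - h$, solve it by variation of constants, estimate the convolution against the exponential envelopes of the $s_i$ (with the same care for the resonant case $\beta_i=1$, where the $t e^{-t}$ term forces a slightly smaller advertised rate), and handle the merely-asymptotically-stable case by splitting the integral at a time $T$ beyond which the forcing is small. The only cosmetic difference is that the paper partitions $I$ by whether each $\beta_i$ is less than, equal to, or greater than $1$ before integrating, whereas you first majorize the forcing by $Qe^{-\beta t}$ with $\beta=\min_i\beta_i$; both yield the same conclusion.
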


Lemma \ref{lemma_o13_F} helps to show that the MAS in phase $\mathcal{O}_{13}$ exponentially converge to the output result in hidden-layer neurons, which is stored in species $P^l_i$, even though there are no isolated LAS/GAS points for the MAS \eqref{eq:SIG_p} and \eqref{eq:SIG_n}. Then, we conclude with the following.
\begin{theorem} \label{thm1}
The biochemical feedforward module composed of the MAS \eqref{dy:n=w_1x}, \eqref{CRNinO_9}-\eqref{eq:fO_13} implements the basic feedforward computation with an exponential convergence to the target results. Such design can be extended to the feedforward FCNN of any depth and width.
\end{theorem}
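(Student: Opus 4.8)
The plan is to treat Theorem~\ref{thm1} as a \emph{cascade} result. The chemical oscillator partitions the feedforward module into the ordered phases $\mathcal{O}^1_7 \to \mathcal{O}_9 \to \mathcal{O}_{11} \to \mathcal{O}_{13} \to \mathcal{O}^1_{15} \to (\text{output-layer sigBCRN})$, and within each phase the governing reactions evolve in isolation because the catalytic clock signals zero out all other phases. Since each phase approaches its limiting steady state before the next ignites, that steady state becomes the initial datum of the next phase, and the whole argument reduces to chaining the single-phase results already in hand and checking that the composed steady states reproduce the FCNN recursion \eqref{eq:weightsum} (with the input samples $s_a(0)$ supplied as the $\mathcal{O}_1$ equilibria of the assignment module, Proposition~\ref{prop.1}).

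First I would verify layer-by-layer correctness and exponential convergence. In $\mathcal{O}^1_7$ the lwsBCRN \eqref{dy:n=w_1x} is the linear system \eqref{eq:21}--\eqref{eq:22}, GAS with rate $1$, so $n^\pm$ converge exponentially to $w^\pm_1(0)s_a(0)=\mathcal{N}^+,\mathcal{N}^-$. Feeding these into $\mathcal{O}_9$, Lemma~\ref{lemma_O9} gives exponential convergence of the annihilation reaction \eqref{CRNinO_9} to the signed net input, so exactly one of $N^\pm_{il}$ carries $|n_{il}|$ while its partner vanishes. Phase $\mathcal{O}_{11}$ then presets the Sigmoid datum: by \eqref{eq:O_1129} the surviving $p^\pm_{il}$ converges exponentially to $\tfrac12$ while the other stays at $0$. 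In $\mathcal{O}_{13}$, Proposition~\ref{prop_O13} yields exponential convergence of \eqref{eq:SIG_p}--\eqref{eq:SIG_n} to $f(n_{il})$ in the positive branch and $\tilde f(-n_{il})=f(n_{il})$ in the negative branch, and Lemma~\ref{lemma_o13_F} transfers that exponential convergence to the embedded combiner \eqref{eq:fO_13}, so $P^l_i$ stores $\tilde\Upsilon_{il}=f(n_{il})$. Repeating the lwsBCRN \eqref{dy:n=w_2x} and a second sigBCRN on $\{N^\pm_{3l},Y^\pm_l,Y^l\}$ produces $\tilde{\mathcal N}$ and finally $y_l=f(\tilde{\mathcal N})$ in $Y^l$, matching \eqref{eq:weightsum}.

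Having confirmed that every phase both converges exponentially and lands on the correct FCNN quantity, I would assemble the global claim by composition: the steady-state map from one phase's limit to the next is exactly the corresponding FCNN map and hence continuous, so the limiting steady state of $Y^l$ equals the target $y_l$, while the overall rate is controlled by the slowest single-phase rate. \textbf{The main obstacle} will be making this composition rigorous without assuming exact equilibration. Each clock phase runs for finite time, so phase $k$ hands over only an approximate, $O(e^{-\gamma t})$-close, steady state; I must bound how this error propagates through the locally Lipschitz dynamics of later phases and, crucially, through the sign-dependent branching in $\mathcal{O}_9$--$\mathcal{O}_{13}$, so that the accumulated error stays exponentially small and the branch selection is never corrupted.

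Finally, the extension to arbitrary depth and width follows by induction on layers: each additional layer contributes one more lwsBCRN$+$sigBCRN block with identical phase structure and identical single-phase guarantees, while enlarging the width merely replicates species in parallel without dynamical coupling (the $\mathcal{O}_{13}$ subnetworks were already shown to be uncoupled). Hence the same cascade argument applies verbatim to any feedforward architecture.
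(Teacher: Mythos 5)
Your proposal follows essentially the same route as the paper's proof: a phase-by-phase cascade in which Lemma~\ref{lemma_O9}, Proposition~\ref{prop_O13}, and Lemma~\ref{lemma_o13_F} supply exponential convergence within each oscillator-gated stage, the limiting steady state of each stage is handed to the next so that the composed equilibria reproduce \eqref{eq:weightsum}, and depth/width extension follows by replicating independent sub-blocks. The only substantive differences are that the paper explicitly models the clock as a bounded positive factor $o_i(t)$ multiplying each phase's vector field and invokes a Converse Lyapunov argument to show this preserves equilibria, stability, and the exponential rate on intervals where $o_i(t)\geq \tilde b_o$ (a point your ``phases evolve in isolation'' idealization glosses over), while the finite-phase-length handoff error you single out as the main obstacle is not resolved in the paper's proof either---it is acknowledged and deferred to Part~\Rmnum{2}.
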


\subsection{Preceding Calculation System}
The pBCRN is composed of \eqref{eq:bound1} - \eqref{eq:bound3}. For showing the asymptotic stability, we first consider the uppermost part \eqref{eq:bound1}, which can be decomposed into three independent subnetworks \cite{feinberg1979lectures} with disjoint species set $\mathscr{M}^1 = \{Y^l, Y^l_e, Y^l_s, \tilde{Y}^l, S^l_3, I^l_y\}$, $\mathscr{M}_{(i)}^2 = \{P^l_i, P^l_{is},\tilde{P}^l_{i}, I^l_{p_i}\}$ with $i=1,2$. For convenience, we represent $\mathscr{M}^1, \mathscr{M}^2$ using the following six-dimensional network and four-dimensional CRNs
\begin{equation}
\begin{array}{c|c}
\underbrace{\begin{split}
X_1 & \stackrel{k}{\longrightarrow} X_2 + X_3 +X_4 \\
X_2  + \tilde{X}_2 & \stackrel{k}{\longrightarrow} \varnothing \\
X_3  + \tilde{X}_3  & \stackrel{k}{\longrightarrow} \varnothing 
\end{split}}_{\mathscr{M}^1}
&
\underbrace{\begin{split}
Y_1 & \stackrel{k}{\longrightarrow} Y_2 +Y_3 \\
Y_2 + \tilde{Y}_2 &\stackrel{k}{\longrightarrow} \varnothing 
\end{split}}_{\mathscr{M}^2}
\end{array}.  
\end{equation}
Note that $\mathscr{M}^1$ and $\mathscr{M}^2$ have the same structure with one replication reaction and several annihilation reactions. The following section aims to prove that both have boundary equilibrium points that are GAS relative to the compatibility class.

\begin{proposition}\label{prop_bCRN1}
Given that $y=(y_1,y_2,y_3,\tilde{y}_2)^{\top}, y(0)=(y_1(0), y_2(0), y_3(0), \tilde{y}_2(0))^{\top} \in \mathbb{R}^{4}_{\geq 0}, d_1=y_1(0)+y_3(0), d_2=y_1(0)+y_2(0)-\tilde{y}_2(0)$. For any positive constant $k \in \mathbb{R}_{\geq 0}$ the MAS $\mathscr{M}^2$ with the dynamical equations
\begin{gather*}
\begin{split}
~\dot{y}_1(t) & = - k y_1(t), ~ \dot{y}_2(t) = k y_1(t) - k y_2(t)\tilde{y}_2(t), \\
\dot{y}_3(t) &  = k y_1(t),~\dot{\tilde{y}}_2=-k y_2(t) \tilde{y}_2(t)~~
\end{split}
\end{gather*}
is GAS in a stoichiometric compatibility class.  
\end{proposition}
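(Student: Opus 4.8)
The plan is to exploit the two linear conservation laws of $\mathscr{M}^2$ together with the fact that the $y_1$-coordinate decouples completely, so that the only genuine nonlinearity is the scalar product $y_2\tilde{y}_2$. First I would verify forward invariance of the nonnegative orthant by checking the sign of each derivative on the corresponding boundary face (for instance, on $y_2=0$ one has $\dot{y}_2=ky_1\geq 0$, and on $\tilde{y}_2=0$ one has $\dot{\tilde{y}}_2=0$), so all concentrations remain nonnegative. Reading the two reaction vectors $(-1,1,1,0)$ and $(0,-1,0,-1)$ off the network gives the stoichiometric subspace and the two independent conserved quantities $d_1=y_1+y_3$ and $d_2=y_1+y_2-\tilde{y}_2$, which pin down the stoichiometric compatibility class of a given initial point. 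Setting $\dot{y}=0$ forces $y_1=0$ and $y_2\tilde{y}_2=0$; combining with $y_2-\tilde{y}_2=d_2$ shows the equilibrium in the class is \emph{unique}, namely $(0,d_2,d_1,0)$ when $d_2\geq 0$ and $(0,0,d_1,-d_2)$ when $d_2<0$.

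For global attractivity I would argue coordinate by coordinate. The equation $\dot{y}_1=-ky_1$ integrates explicitly to $y_1(t)=y_1(0)e^{-kt}$, so $y_1\to 0$ and, through $y_3=d_1-y_1$, also $y_3\to d_1$. The crux is the pair $(y_2,\tilde{y}_2)$, coupled through the quadratic term $ky_2\tilde{y}_2$. Here the key observation is that $\dot{\tilde{y}}_2=-ky_2\tilde{y}_2\leq 0$ on the invariant orthant, so $\tilde{y}_2(t)$ is nonincreasing and bounded below by $0$, hence converges to some $\tilde{y}_2^{*}\geq 0$. Integrating the same equation gives $k\int_0^{\infty}y_2\tilde{y}_2\,dt=\tilde{y}_2(0)-\tilde{y}_2^{*}<\infty$; since all coordinates stay bounded, $y_2\tilde{y}_2$ is uniformly continuous, and Barbalat's lemma yields $y_2\tilde{y}_2\to 0$. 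Passing to the limit in $y_2=\tilde{y}_2+d_2-y_1$ gives $y_2^{*}=\tilde{y}_2^{*}+d_2$, and $(\tilde{y}_2^{*}+d_2)\tilde{y}_2^{*}=0$ together with nonnegativity selects exactly the unique equilibrium identified above in each sign regime of $d_2$.

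To upgrade attractivity to asymptotic stability I would establish Lyapunov stability of that equilibrium. The natural route is to treat the system as asymptotically autonomous, since the forcing $y_1(t)$ decays exponentially; its limit system reduces, via the conservation law, to the scalar logistic equation $\dot{\tilde{y}}_2=-k(\tilde{y}_2+d_2)\tilde{y}_2$, whose relevant equilibrium is hyperbolic and stable whenever $d_2\neq 0$. One may then either invoke a standard asymptotically-autonomous convergence result or build a local Lyapunov function such as $V=(\tilde{y}_2-\tilde{y}_2^{*})^2$ and absorb the $O(e^{-kt})$ perturbation. Combined with the global convergence already shown, this gives GAS relative to the compatibility class, and the explicit decay of $y_1$ plus the monotone convergence of $\tilde{y}_2$ make the argument self-contained.

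The main obstacle I anticipate is handling the quadratic coupling $y_2\tilde{y}_2$ honestly in the presence of the time-varying forcing $y_1(t)$: the convenient reduction to a scalar logistic ODE holds exactly only in the limit $y_1\equiv 0$, so the rigorous step is controlling the transient contribution of $y_1(t)$, either through the monotonicity-plus-Barbalat argument above (which sidesteps solving the coupled nonlinear system directly) or through an asymptotically-autonomous-systems theorem. The borderline case $d_2=0$, where the two candidate equilibria coalesce and the scalar system loses hyperbolicity, should be treated by a separate direct estimate.
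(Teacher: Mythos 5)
Your proposal is correct, and it reaches the conclusion by a genuinely different route from the paper. The paper's proof fixes the candidate polyhedral Lyapunov function $V(y)=\Vert y-\bar{y}\Vert_1$ on the compatibility class $\{2y_1+y_2+y_3-\tilde{y}_2=d_1+d_2\}$, computes the Dini derivative region by region (obtaining, e.g., $D^+V=-ky_1-2ky_2\tilde{y}_2$ where $y_2\ge d_2$ and $D^+V=-3ky_1$ where $y_2<d_2$), and then rules out the non-equilibrium boundary sets where this derivative could vanish; the two sign cases $d_2>0$ and $d_2<0$ are treated separately. You instead exploit the explicit solvability of $y_1$, the two conservation laws to eliminate $y_3$ and $y_2$, the monotone decrease of $\tilde{y}_2$, and integrability of $y_2\tilde{y}_2$ (via Barbalat or, even more simply, by noting that both factors already converge by monotonicity plus conservation, so the integrable limit must be zero). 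Your argument is more elementary, avoids the delicate boundary-set bookkeeping, and — unlike the paper — explicitly covers the degenerate case $d_2=0$ where the two candidate equilibria coalesce; indeed the monotonicity of $\tilde{y}_2$ within a fixed class already yields Lyapunov stability directly, so the asymptotically-autonomous machinery you sketch for that step is heavier than necessary. What the paper's choice of $V$ buys, however, is reuse: the very same polyhedral function is the engine of the subsequent Proposition on finite-time exponential stability, where the bound $D^+V\le -KV$ on out-epsilon regions is extracted from the same regional computations. Your qualitative convergence argument would not hand you that rate estimate for free, so if you pursued the quantitative follow-up you would need to supplement it.
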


The proof in the Appendix is finished by utilizing the candidate polyhedral Lyapunov function $V = \Vert \bm{y} - \bm{\bar{y}} \Vert _1$ for stability analysis. In particular, it is impossible to find a uniform positive constant $K$ for all $t \geq 0$ to describe the global exponential stability of $\mathscr{M}^2$ because multiple boundary equilibrium reactions in our design force the reaction rate slow when the trajectory is about to reach equilibrium. Therefore, we provide the following definitions.
\begin{definition}[Out-epsilon region]
For any point $\bar{x}$ in $\mathbb{R}^n_{\geq 0}$ and $\forall \epsilon > 0$, we call the region $\mathcal{O} =\{x \in \mathbb{R}^n_{\geq 0}| \Vert x -\bar{x} \Vert_1 \geq \epsilon \}$ \textit{out-epsilon region of $\bar{x}$}.
\end{definition}
\begin{definition}[Finite-time exponential stability]
Consider a MAS admitting an asymptotic stable equilibrium point $\bar{x}$. It is \textit{finite-time exponential stable} if for any finite time $T$, there exists an out-epsilon region $\mathcal{O}_T$ of $\bar{x}$ and positive constants $M,\gamma$ \textgreater $0$  such that $\Vert x(t) - \bar{x} \Vert \leq M e^{-\gamma t}$ for $x(t) \in \mathcal{O}_T$. 
\end{definition}

It implies that the system will converge exponentially to any $\epsilon$-neighbourhood of the equilibrium. Then, we give the convergence description of the MAS $\mathscr{M}^2$.
\begin{proposition}\label{prop_finit_ex}
The MAS $\mathscr{M}^2$ allows the finite-time exponential stability for the boundary equilibrium point in any stoichiometric compatibility class.
\end{proposition}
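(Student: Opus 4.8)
The plan is to exploit the near-triangular structure of $\mathscr{M}^2$ together with the two conserved quantities $y_1+y_3=d_1$ and $y_1+y_2-\tilde{y}_2=d_2$ fixed by the stoichiometry. First I would integrate the decoupled linear part: since $\dot y_1=-ky_1$ gives $y_1(t)=y_1(0)e^{-kt}$ and $\dot y_3=ky_1$ gives $y_3(t)=d_1-y_1(0)e^{-kt}$, both of these components converge to their equilibrium values with the \emph{uniform} exponential rate $k$ for all $t\ge 0$. This isolates the only nontrivial behaviour to the annihilation pair $(y_2,\tilde y_2)$, which I would reduce to a single scalar equation through the conserved relation $y_2(t)=\tilde y_2(t)+d_2-y_1(t)$.

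The next step is to identify the equilibrium by the sign of $d_2$ and to locate the source of the slowdown. Writing $\dot{\tilde y}_2=-k\,y_2\,\tilde y_2$, the species that vanishes at equilibrium decays at the instantaneous rate $k$ times its partner's concentration. When $d_2>0$ the equilibrium is $(\bar y_2,\bar{\tilde y}_2)=(d_2,0)$ and, since $y_2\ge d_2-y_1(t)\to d_2$, the partner $y_2$ stays bounded away from $0$, so $\tilde y_2$ and then $y_2-d_2=\tilde y_2-y_1$ decay exponentially; the case $d_2<0$ is symmetric with the roles of $y_2$ and $\tilde y_2$ interchanged. The genuinely degenerate case is $d_2=0$, where $\bar y_2=\bar{\tilde y}_2=0$ and $y_2=\tilde y_2-y_1$, so that asymptotically $\dot{\tilde y}_2\approx -k\tilde y_2^2$; the quadratic right-hand side forces an $O(1/t)$ decay and precludes any uniform exponential bound, which is precisely why the weaker finite-time notion is required.

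To obtain the finite-time exponential estimate I would fix $\epsilon>0$ and work inside the out-epsilon region $\mathcal O_\epsilon=\{x:\|x-\bar x\|_1\ge\epsilon\}$. By Proposition~\ref{prop_bCRN1} the trajectory remains in a compact subset of the compatibility class and leaves $\mathcal O_\epsilon$ after some finite time $T_\epsilon$, so it suffices to bound the solution on the initial segment where $x(t)\in\mathcal O_\epsilon$. On that segment the total deviation exceeds $\epsilon$ while $y_1$ is exponentially small; combined with the conserved relation $y_2-\tilde y_2=d_2-y_1$, the two annihilation concentrations cannot be simultaneously small, which yields a lower bound $y_2(t)\ge c(\epsilon)>0$ (resp. $\tilde y_2(t)\ge c(\epsilon)$) for the partner of the vanishing species. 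Substituting this into $\dot{\tilde y}_2=-k y_2\tilde y_2$ gives $\dot{\tilde y}_2\le -k\,c(\epsilon)\,\tilde y_2$, hence an exponential estimate with rate $\gamma(\epsilon)=k\,c(\epsilon)$ on $\mathcal O_\epsilon$; taking $\gamma=\min\{k,\gamma(\epsilon)\}$ and a constant $M$ that absorbs the $y_1,y_3$ transients together with the equivalence between $\|\cdot\|_1$ and $\|\cdot\|$ produces $\|x(t)-\bar x\|\le M e^{-\gamma t}$ whenever $x(t)\in\mathcal O_\epsilon$, which is the claimed finite-time exponential stability.

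The main obstacle I anticipate is making the lower bound $c(\epsilon)$ on the partner concentration rigorous and uniform over the admissible segment: one must quantify how the exponentially decaying transient $y_1(t)$ can momentarily shrink both $y_2$ and $\tilde y_2$ near the start, and argue, using the GAS conclusion and the sign of $d_2$, that once $\|x-\bar x\|_1\ge\epsilon$ the partner variable is bounded below by a constant depending only on $\epsilon$ and the data $d_1,d_2,k$. Reconciling the three cases $d_2\gtrless 0$ and $d_2=0$ under a single pair $\gamma,M$, and confirming that the exponential estimate is only asserted up to the finite exit time $T_\epsilon$ so that it stays consistent with the $O(1/t)$ tail in the degenerate case, is the delicate bookkeeping that the formal proof must carry out.
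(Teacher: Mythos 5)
Your proposal is correct in its core mechanism and reaches the same destination as the paper, but by a genuinely different route. You integrate the linear block $(y_1,y_3)$ explicitly and use the conservation law $y_1+y_2-\tilde{y}_2=d_2$ to collapse the annihilation pair to a single scalar inequality $\dot{\tilde{y}}_2\le -k\,c\,\tilde{y}_2$; the paper instead keeps all four components together, reuses the polyhedral Lyapunov function $V=\Vert y-\bar y\Vert_1$ from Proposition~\ref{prop_bCRN1}, computes its Dini derivative separately on the sectors $D^1$, $D^2$ (and the $d_2<0$ analogue), extracts constants $K_1,K_2,K_3$ from componentwise lower bounds $\vert y_1\vert\ge\epsilon_1$, $\vert\tilde y_2\vert\ge\epsilon_4$ valid on the finite window, and closes with the comparison lemma. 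Your version is more elementary and more transparent about \emph{where} exponentiality fails: you correctly isolate $d_2=0$ as the degenerate class with $\dot{\tilde y}_2\approx -k\tilde y_2^{\,2}$ and $O(1/t)$ decay, a case the paper's proofs of Propositions~\ref{prop_bCRN1} and~\ref{prop_finit_ex} silently omit (they only treat $d_2>0$ and $d_2<0$). The paper's Lyapunov route, in exchange, gives a single scalar differential inequality controlling all components at once and transfers with no extra work to the six-dimensional network $\mathscr{M}^1$ in Proposition~\ref{prop_bCRN2}.

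The one claim you must repair is the assertion that the partner concentration admits a lower bound $c(\epsilon)$ ``depending only on $\epsilon$ and the data'' inside the out-epsilon region. That is false as stated: for $d_2>0$ and $\epsilon<d_2$, the admissible point $(y_1,y_2,\tilde y_2)=(d_2,0,0)$ lies in the out-epsilon region (its $\ell_1$-deviation is at least $d_2$) yet has partner $y_2=0$, and a trajectory started with $y_2(0)=0$ and large $y_1(0)$ has $\inf_{[0,T]}y_2=0$. The bound therefore cannot come from the out-epsilon condition alone; it must come from the finite horizon $T$ itself, exactly as in the paper's argument --- on $[t_0,T]$ the relevant components are continuous, strictly positive, and (for $y_1$, $\tilde y_2$) monotone, so their minima over the compact window are positive constants depending on $T$ and the initial data, and the definition of finite-time exponential stability explicitly permits $M$ and $\gamma$ to depend on $T$ through the choice of $\mathcal{O}_T$. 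With that substitution your argument closes; you flagged this as the delicate point, but the fix should be stated rather than left as an obstacle.
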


It is worth saying that the exponential order will decrease following the decrease of $\epsilon$, which is consistent with our intuition. The following proposition shows the stability of $\mathscr{M}^1$.
\begin{proposition} \label{prop_bCRN2}
Let $x(t)$ be the concentration vector of $\mathscr{M}^1$ and $\bm{x}(0) = (x_1(0),x_2(0),x_3(0),x_4(0),\tilde{x}_2(0), \tilde{x}_3(0))^{\top} \in \mathbb{R}^6_{\geq 0}$. Assume that $d_1=x_1(0)+x_4(0), d_2=x_1(0)-\tilde{x}_3(0)+x_3(0), d_3=x_1(0)-\tilde{x}_2(0)+x_2(0)$. For any positive constant $k$, the MAS $\mathscr{M}^1$ with equations 
\begin{gather*}
\begin{split}
    ~\dot{x}_1(t) & = - k x_1(t), ~\dot{x}_4(t) =  k x_1(t),\\
    \dot{x}_2(t) &= k x_1(t) - k x_2(t)\tilde{x}_2(t),~\dot{\tilde{x}}_2=-k x_2(t) \tilde{x}_2(t),\\
    \dot{x}_3(t) &= k x_1(t)-k x_3(t)\tilde{x}_3(t),~\dot{\tilde{x}}_3=-k x_3(t) \tilde{x}_3(t)~~
\end{split}
\end{gather*}
is GAS in a stoichiometric compatibility class and allows finite-time exponential stability.
\end{proposition}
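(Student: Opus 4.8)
The plan is to exploit the triangular (cascade) structure of $\mathscr{M}^1$ and thereby reduce its analysis to the already-established behaviour of $\mathscr{M}^2$. The crucial observation is that $X_1$ is autonomous: $\dot x_1 = -kx_1$ gives $x_1(t)=x_1(0)e^{-kt}$, so $X_1$ decays exponentially and no downstream species feeds back into it. Consequently $x_4$ is a pure integrator of this decaying input, whence $x_4(t)=x_4(0)+x_1(0)(1-e^{-kt})\to d_1$ exponentially. Moreover, the two annihilation reactions $X_2+\tilde X_2\to\varnothing$ and $X_3+\tilde X_3\to\varnothing$ act on disjoint species pairs, so the remaining dynamics splits into two mutually independent triples $(x_1,x_2,\tilde x_2)$ and $(x_1,x_3,\tilde x_3)$, each governed by exactly the active equations of the $\mathscr{M}^2$ system treated in Proposition \ref{prop_bCRN1} and Proposition \ref{prop_finit_ex}, with $x_4$ playing the role of the accumulator $y_3$ shared by both triples.

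First I would record the three conservation laws that fix the stoichiometric compatibility class. A direct check gives $\frac{d}{dt}(x_1+x_4)=0$, $\frac{d}{dt}(x_1+x_2-\tilde x_2)=0$ and $\frac{d}{dt}(x_1+x_3-\tilde x_3)=0$, so $x_1+x_4\equiv d_1$, $x_1+x_2-\tilde x_2\equiv d_3$ and $x_1+x_3-\tilde x_3\equiv d_2$; since the three reaction vectors are linearly independent, these exhaust the invariants. Letting $t\to\infty$ (so that $x_1\to0$ and $x_2\tilde x_2,\ x_3\tilde x_3\to0$) then pins down the unique boundary equilibrium in each class: $x_4=d_1$, and for the pair $(x_2,\tilde x_2)$ one has $(\bar x_2,\bar{\tilde x}_2)=(d_3,0)$ when $d_3>0$ and $(0,-d_3)$ when $d_3<0$, with the analogous dichotomy for $(x_3,\tilde x_3)$ governed by the sign of $d_2$.

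With the equilibrium identified, I would conclude blockwise. Since each triple $(x_1,x_\bullet,\tilde x_\bullet)$ is an $\mathscr{M}^2$-type system, Proposition \ref{prop_bCRN1} gives GAS of its boundary equilibrium relative to the compatibility class, and Proposition \ref{prop_finit_ex} gives finite-time exponential stability. Combining the two independent blocks with the exponentially converging driver $x_1$ and accumulator $x_4$ then yields GAS and finite-time exponential stability for the full six-dimensional $\mathscr{M}^1$; formally this is a cascade of an exponentially stable driving subsystem into two GAS (finite-time exponentially stable) downstream subsystems that do not interact. As an alternative route I would replicate the polyhedral Lyapunov argument of Proposition \ref{prop_bCRN1} directly, taking $V=\Vert x-\bar x\Vert_1$ on the full system, whose time derivative decomposes into contributions along the three conserved directions and is nonpositive, vanishing only at $\bar x$.

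The main obstacle is that $\mathscr{M}^1$ is \emph{not} a disjoint union of two full $\mathscr{M}^2$ systems---the driver $X_1$ and the accumulator $X_4$ are shared---so Proposition \ref{prop_bCRN1} cannot be cited verbatim and the interconnection must be justified. The work lies in making the cascade argument rigorous: one must verify that the downstream triples never feed back into $x_1$ (triangular structure) and that the shared, exponentially vanishing input $x_1(t)$ acts as an asymptotically negligible perturbation preserving GAS of each block. The only genuine subtlety, exactly as for $\mathscr{M}^2$, is the reaction slowdown near the boundary equilibrium caused by the bilinear annihilation terms $x_\bullet\tilde x_\bullet$: a uniform global exponential rate is unattainable, which is precisely why convergence is stated in the finite-time exponential sense, with the exponential order degrading as the target $\epsilon$-neighbourhood shrinks.
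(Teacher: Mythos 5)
Your proposal is correct, but it takes a genuinely different route from the paper. The paper's proof repeats the polyhedral Lyapunov analysis of Proposition \ref{prop_bCRN1} directly on the full six-dimensional system: it takes $V_x(x)=\Vert x-\bar{x}\Vert_1$, distinguishes four cases of compatibility classes according to the signs of $d_2$ and $d_3$, partitions each class into sectors depending on $x_2$ and $x_3$ (nine regions in total), shows $D^+V_x<0$ region by region, and then extracts a constant $K_x$ with $\dot{V}_x<-K_xV_x$ on out-epsilon regions exactly as in Proposition \ref{prop_finit_ex}. You instead reduce $\mathscr{M}^1$ to two overlapping copies of $\mathscr{M}^2$: the projections onto $(x_1,x_2,x_4,\tilde{x}_2)$ and $(x_1,x_3,x_4,\tilde{x}_3)$ each satisfy \emph{exactly} the $\mathscr{M}^2$ equations (with $x_4$ in the role of $y_3$), so Propositions \ref{prop_bCRN1} and \ref{prop_finit_ex} apply to each projection, and convergence of both projections gives convergence of the full state up to norm-equivalence constants. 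Your closing worry that Proposition \ref{prop_bCRN1} ``cannot be cited verbatim'' because $X_1$ and $X_4$ are shared is actually unfounded: since $x_1$ and $x_4$ evolve autonomously and each four-tuple's dynamics is a closed copy of $\mathscr{M}^2$ with no external input, no cascade or perturbation argument is needed --- the sharing of coordinates between the two blocks affects neither block's internal dynamics, only the bookkeeping when you recombine the two error bounds into one. Your route buys a much shorter proof that avoids the nine-region case analysis and reuses the established results; the paper's route buys a single explicit Lyapunov function and decay constant for the whole system, which its subsequent error analysis can use directly. Your identification of the conservation laws and of the sign-dependent boundary equilibria matches the paper's.
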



Since $\mathscr{M}^1$ and $\mathscr{M}^2$ are two independent subnetworks without non-coupling components, the parent network \eqref{eq:bound1} also admits a unique global asymptotically stable boundary equilibrium and finite-time exponential stability following Propositions \ref{prop_bCRN1} - \ref{prop_bCRN2}. Further, let $x^l_e=(e^+_l,e^-_l,s^l_y)^{\top}, y^l_p=(s^l_{p1},s^l_{p2})^{\top}$. The exponential convergence of the pBCRN system in the out-epsilon region is consequently confirmed by considering $x^l_e, y^l_p, e^l$ via Lemma \ref{lemma_o13_F}. Regarding the jBCRN system, Lemma \ref{lemma_o13_F} also promotes to induce the asymptotic stability of species $C_a$ following the bistability of \eqref{eq:bistable}.


\subsection{Backpropagation Module}
\begin{figure}[!t]
\centerline{\includegraphics[width=\columnwidth]{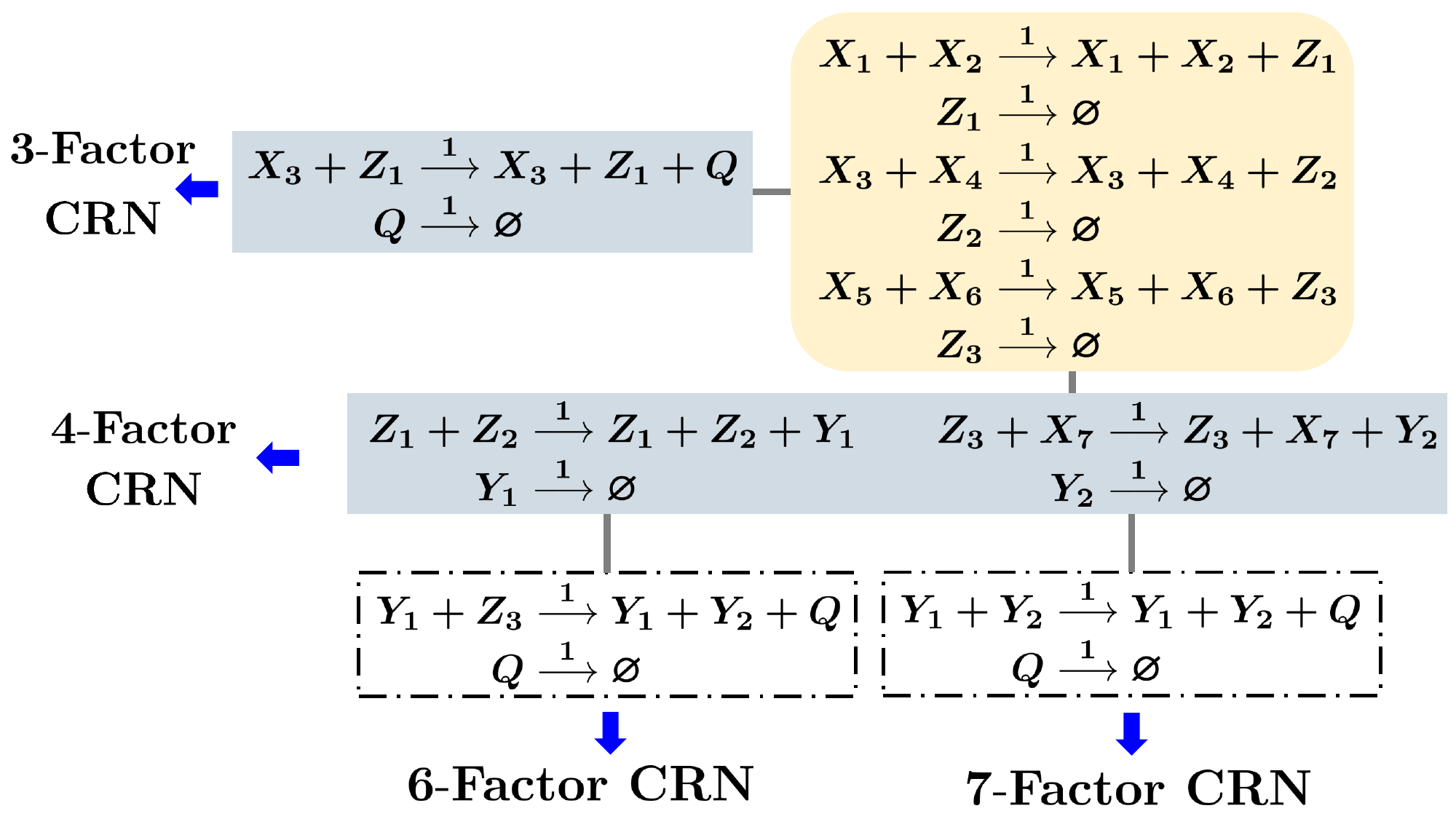}}
\caption{The schematic diagrams of reaction network structures appeared in $\mathcal{O}^1_{27}$. The \textit{root block} (shaded yellow) for the first product is shared by all factor CRNs. Different \textit{stem blocks} (shaded blue) for the second product are shared by different factor CRNs. Only six/seven-factor CRNs need the \textit{leaf blocks} (dashed frame) for the third multiplication.}
\label{fig:3}
\end{figure}
The MAS uBCRN has been shown to be GES in the past work \cite{FAN20227}. In this section, we work on the ngBCRN system and show that it has a GES equilibrium point, which means that all reactions occurring in parallel can perform sequential calculations to get all negative gradient values. Moreover, the final fixed point of the learning module after phase $\mathcal{O}_{29}$ is exactly the fixed point of the MBGD iterative algorithm.

In phase $\mathcal{O}_{27}$, four structural paradigms designed for computing multiplication are shown in Fig.\ref{fig:3}. Every paradigm is constituted by several blocks that are used to calculate two-factor multiplications. In particular, the concentration of reactant complexes of the root block is fixed, whereas the reactant complexes of stem and leaf blocks come from the product complexes in the upper blocks whose concentration might vary over time. Thus, we propose a variant of Lemma \ref{lemma_o13_F} to handle such the overlay of multiplication blocks.
\begin{lemma} \label{lemma_O27}
Consider the MAS $\mathscr{C}$ with the kinetic equation \eqref{eq:lemma21}, and assume that the certain species subset $\{S_1, S_2\}$ has the limiting steady states, and $\exists ~q_i, \beta_i > 0$ such that $\vert s_i(t) - \bar{s}_i \vert < q_i e^{- \beta_i t}$ for $\forall t \geq 0$. Then if we embed the MAS $\tilde{\mathscr{C}}$ (called type-\Rmnum{2} CRN) 
\begin{gather}
    \begin{split}
 S_1 +S_2 & \stackrel{1}{\longrightarrow} S_1 + S_2 +H, \\
    H &\stackrel{1}{\longrightarrow} \varnothing 
    \end{split}
\end{gather}
into $\mathscr{C}$, the species $H$ will inherit the exponential convergence of equilibrium $\bar{s}_1 \bar{s}_2$ in the parent network $\mathscr{C}^*$.
\end{lemma}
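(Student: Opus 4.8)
The plan is to analyze the dynamics of the embedded species $H$ governed by $\dot{h}(t) = s_1(t) s_2(t) - h(t)$, which follows directly from mass-action kinetics applied to the type-\Rmnum{2} CRN (the reactants $S_1, S_2$ act catalytically, producing $H$ at rate $s_1(t)s_2(t)$, while the decay reaction contributes $-h(t)$). Since this is a linear first-order ODE in $h$ with a time-varying forcing term $u(t) \triangleq s_1(t) s_2(t)$, I would write the explicit solution via the integrating factor $e^{t}$:
\begin{equation*}
h(t) = h(0) e^{-t} + \int_0^t e^{-(t-\tau)} s_1(\tau) s_2(\tau)\, d\tau.
\end{equation*}
The target equilibrium is $\bar{h} = \bar{s}_1 \bar{s}_2$, consistent with setting $\dot{h}=0$ once $s_1, s_2$ have reached their limits.

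The key step is to bound the error $|h(t) - \bar{h}|$. First I would control the deviation of the forcing term from its limit by writing
\begin{equation*}
s_1(t) s_2(t) - \bar{s}_1 \bar{s}_2 = \bigl(s_1(t) - \bar{s}_1\bigr) s_2(t) + \bar{s}_1 \bigl(s_2(t) - \bar{s}_2\bigr),
\end{equation*}
and then using the hypothesis $|s_i(t) - \bar{s}_i| < q_i e^{-\beta_i t}$ together with the boundedness of the trajectories (mass-action trajectories of $\mathscr{C}$ remain in a compact stoichiometric compatibility class, so $s_2(t)$ is bounded by some constant). This yields an estimate of the form $|u(t) - \bar{h}| \le Q\, e^{-\beta t}$ for suitable constants $Q > 0$ and $\beta = \min\{\beta_1, \beta_2\} > 0$. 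Substituting $\bar{h} = \int_0^t e^{-(t-\tau)}\bar{h}\, d\tau + \bar{h} e^{-t}$ into the solution, the error becomes
\begin{equation*}
h(t) - \bar{h} = \bigl(h(0) - \bar{h}\bigr) e^{-t} + \int_0^t e^{-(t-\tau)} \bigl(u(\tau) - \bar{h}\bigr)\, d\tau,
\end{equation*}
and I would bound the convolution integral by a standard computation, splitting into the cases $\beta \ne 1$ and $\beta = 1$. In both cases one obtains $|h(t) - \bar{h}| \le M e^{-\gamma t}$ with $\gamma = \min\{1, \beta\} > 0$, which is precisely the claimed exponential convergence of $H$ to $\bar{s}_1 \bar{s}_2$ in the parent network $\mathscr{C}^*$.

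The main obstacle I anticipate is justifying that embedding $\tilde{\mathscr{C}}$ does not perturb the dynamics of $S_1$ and $S_2$, so that their exponential convergence hypothesis still holds in $\mathscr{C}^*$ rather than only in $\mathscr{C}$. Because $S_1$ and $S_2$ appear only catalytically in $\tilde{\mathscr{C}}$ (their net stoichiometric change is zero), the governing equations for $s_1, s_2$ are unchanged, and the subsystem on $\{S_1, S_2\}$ evolves autonomously and identically in $\mathscr{C}^*$; this is the analogue of the non-interference argument already used for type-\Rmnum{1} CRNs in Lemma \ref{lemma_o13_F}. The remaining care lies in handling the convolution bound uniformly (the resonant case $\beta = 1$ produces a $t\, e^{-t}$ factor, which is still dominated by $e^{-\gamma t}$ for any $\gamma < 1$), and in confirming that the limiting steady state of the forcing term is indeed the product $\bar{s}_1 \bar{s}_2$, which is immediate from continuity. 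Overall this is a direct adaptation of the proof of Lemma \ref{lemma_o13_F}, with the only new ingredient being the product-rule splitting of the bilinear forcing term.
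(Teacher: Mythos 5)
Your proposal is correct and follows essentially the same route as the paper's proof: the variation-of-constants formula for $\dot h = s_1 s_2 - h$, a splitting of the bilinear forcing error, and a convolution bound with separate treatment of the resonant rate. The only differences are cosmetic: you use the two-term splitting $(s_1-\bar s_1)s_2 + \bar s_1(s_2-\bar s_2)$ with $s_2(t)$ bounded (which follows directly from the hypothesis $|s_2(t)-\bar s_2|<q_2$, so you need not and should not invoke compactness of the stoichiometric compatibility class, which is not guaranteed in general), whereas the paper retains the cross term $(s_1-\bar s_1)(s_2-\bar s_2)$ explicitly and consequently distinguishes the cases $\beta_1+\beta_2 \lessgtr 1$; both yield an exponential rate of the form $\min\{\beta_1,\beta_2,\gamma\}$ with $\gamma<1$ absorbing the $te^{-t}$ resonance.
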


Since the reaction system in the root block in Fig.\ref{fig:3} is GAS, all types of factor CRNs systems are easily shown to be GAS in a similar way as the proof of Lemma \ref{lemma_o13_F}. Then combining Lemma \ref{lemma_O27}, we have the next theorem.
\begin{theorem} \label{thm_final}
Given any nonnegative catalyst concentration, ngBCRN allows global exponential stability and implements a negative gradient calculation. Moreover, the fixed point of uBCRN equals the new weight matrix in every iteration.
\end{theorem}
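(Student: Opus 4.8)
The plan is to exploit the \emph{feedforward (cascade) structure} of the ngBCRN. In every multiplication reaction of $\mathcal{O}^1_{27}$ the two factor species enter as catalysts, so the concentration of a product species never feeds back into the species that generate it. Hence the whole ngBCRN is a triangular system whose layers---the root, stem and leaf blocks of Fig.\ref{fig:3}, followed by the summation block $\mathcal{O}^2_{27}$---may be analyzed one at a time, with global exponential stability propagated from each layer to the next. The catalyst concentrations feeding the root blocks are the fixed equilibrium outputs of the previously analyzed modules, so the conclusion will hold for any such nonnegative values, which is exactly the hypothesis of the theorem.

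For the base case I would treat a root block. Its reactant complexes are constants in phase $\mathcal{O}_{27}$, so each product concentration $m$ satisfies the scalar linear equation $\dot{m}=\bar{s}_1\bar{s}_2-m$ and is trivially GES, converging exponentially to the two-factor product $\bar{s}_1\bar{s}_2$ (e.g. $\bar{m}^{1,+}_l=e^+_l\,p^l_1$). I would then push exponential convergence through the stem and leaf layers by an inductive application of Lemma \ref{lemma_O27}: at each layer the two reactants are products manufactured one level above, which by the induction hypothesis reach limiting steady states exponentially (a constant catalyst trivially satisfies this bound), so Lemma \ref{lemma_O27} yields that the new species $T$ (resp.\ $Q$) inherits exponential convergence to the product of the two upstream equilibria. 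Iterating along the tree of Fig.\ref{fig:3} shows every monomial species $Q^l_{s\pm\pm},Q^l_{v\pm}$ converges exponentially to its intended three-, four-, six- or seven-factor monomial.

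It remains to assemble the sum and match the gradient. The block $\mathcal{O}^2_{27}$ is a type-\Rmnum{1} CRN, so Lemma \ref{lemma_o13_F} transfers exponential convergence to each $Par^{\pm}_i$, whose limiting value is the sum over $l$ of the relevant monomials; for instance the equilibrium of $Par^+_1$ equals $\sum_l(\bar{q}^l_{1++}+\bar{q}^l_{1--})$, which upon unwinding the \textit{dual-rail encoding} is exactly $par^+_{1_{11}}$. Checking this identification for every index confirms that the globally exponentially stable equilibrium of the ngBCRN is the negative gradient. For the uBCRN I would invoke the global exponential stability proved in \cite{FAN20227} and only locate its fixed point: reading off $\mathcal{O}_{29}$, the increment species obey $\dot{\Delta w}^{\pm}_i=l\,par^{\pm}_i-\Delta w^{\pm}_i$ with $l\equiv\eta$, whence $\bar{\Delta w}^{\pm}_i=\eta\,par^{\pm}_i$, and the weight species obey $\dot{w}^{\pm}_i=\Delta w^{\pm}_i+\gamma^{\pm}_i-w^{\pm}_i$, whence $\bar{w}^{\pm}_i=\eta\,par^{\pm}_i+\gamma^{\pm}_i$. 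Since $\Gamma^{\pm}_i$ carries the current weight $\mathcal{W}^{m,\pm}_i$ (transferred in phase $\mathcal{O}^2_7$), the dual-rail difference $\bar{w}^+_i-\bar{w}^-_i$ is precisely $\mathcal{W}^{m+1}$ of \eqref{eq:new weight}.

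The main obstacle is the inductive step through the multiplication tree. Lemma \ref{lemma_O27} is stated for a single type-\Rmnum{2} block embedded into an arbitrary parent, so the delicate point is to verify that when the product of one block becomes a reactant of the next, the ``limiting steady state reached exponentially'' hypothesis is genuinely preserved at every level and that the catalytic role of all upstream species leaves the coupling strictly one-directional, with no hidden feedback closing the loop. Making this precise---so that the layerwise equilibria compose into an honest product and the composite rate is the minimum of the individual layer rates---is the technical heart of the argument; the remaining parts reduce to the linear-ODE computations above.
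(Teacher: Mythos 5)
Your proposal is correct and follows essentially the same route as the paper: the root blocks are linear GES systems with constant catalysts, exponential convergence is pushed through the stem and leaf blocks via Lemma \ref{lemma_O27}, the summation block $\mathcal{O}^2_{27}$ is handled by Lemma \ref{lemma_o13_F}, and the uBCRN fixed point $\bar{w}^{\pm}=\gamma^{\pm}(0)+\eta\,par^{\pm}(0)$ is read off from the linear update dynamics. The inductive step you flag as the technical heart is exactly the point the paper treats briefly (asserting the factor CRNs are GAS ``in a similar way as the proof of Lemma \ref{lemma_o13_F}''), so your attention to verifying the hypothesis propagation through the cascade is, if anything, more careful than the published argument.
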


Therefore, based on our termination mechanism, the final fixed point of the uBCR system is exactly the fixed point of the mini-batch gradient descent algorithm. Additionally, compared to \cite{lakin2023design}, the computation results of our backpropagation module are robust to the initial concentrations of gradient species and weights species due to GES.
\section{Application to binary classification of logic gates problems} 
\label{sec:case study}
Following the construction proposed above, in this section, we will test the learning performance of our BFCNN using two typical logic classification problems. One is a simple linear separable case ``OR'' gate with the training sample $\chi^1_{\cdot 1}=(1,0,1)^{\top},\chi^1_{\cdot 2}=(0,0,0)^{\top},\chi^1_{\cdot 3}=(1,1,1)^{\top},\chi^1_{\cdot 4}=(0,1,1)^{\top}$, and another is a linear inseparable case ``XOR'' with $\chi^2_{\cdot 1}=(1,0,1)^{\top},\chi^2_{\cdot 2}=(0,0,0)^{\top},\chi^2_{\cdot 3}=(1,1,0)^{\top},\chi^2_{\cdot 4}=(0,1,1)^{\top}$.
The entire run of BFCNN occupies $29$ phases of chemical and we give $32$ phases with $32$ species for buffering purposes. In our simulations, the rate constant $k_o$ is given as $2$, the initial concentrations for the last two species are set to $1$ and others are $10^{-6}$ \cite{arredondo2022supervised}. In order to train the biological neural network, we begin with two initial weights species concentrations (displayed in Appendix \Rmnum{1}), set the error threshold to be $0.5$ using rate constants $k_1 = 8, k_2 =1, k_3=2, k_4=0.4375$ in \eqref{eq:bistable}, let the learning rate $\eta=0.9$ for both cases and force the batch size to be $\tilde{p}=2$ for the four training samples. These settings ensure that the training was terminated in the $12_{th}$ iteration for ``XOR'' and the $5_{th}$ training round for ``OR'' with the final weights (shown in Appendix \Rmnum{1}) and actual outputs $y_{OR} = [0.6654, 0.3950,0.6358,0.5724]$ and $y_{XOR}=[0.5129,0.4188.0.4503,0.5082]$. To visualize the linear and nonlinear classification surface, we generate $100$ points in the plane $(0,1) \times (0,1)$ and feed them into the feedforward part with optimal weights in BFCNN to compute corresponding outputs and finally, we get the heatmap shown in Fig.\ref{fig:9}.

\begin{figure}[!t]
\centerline{\includegraphics[width=\columnwidth]{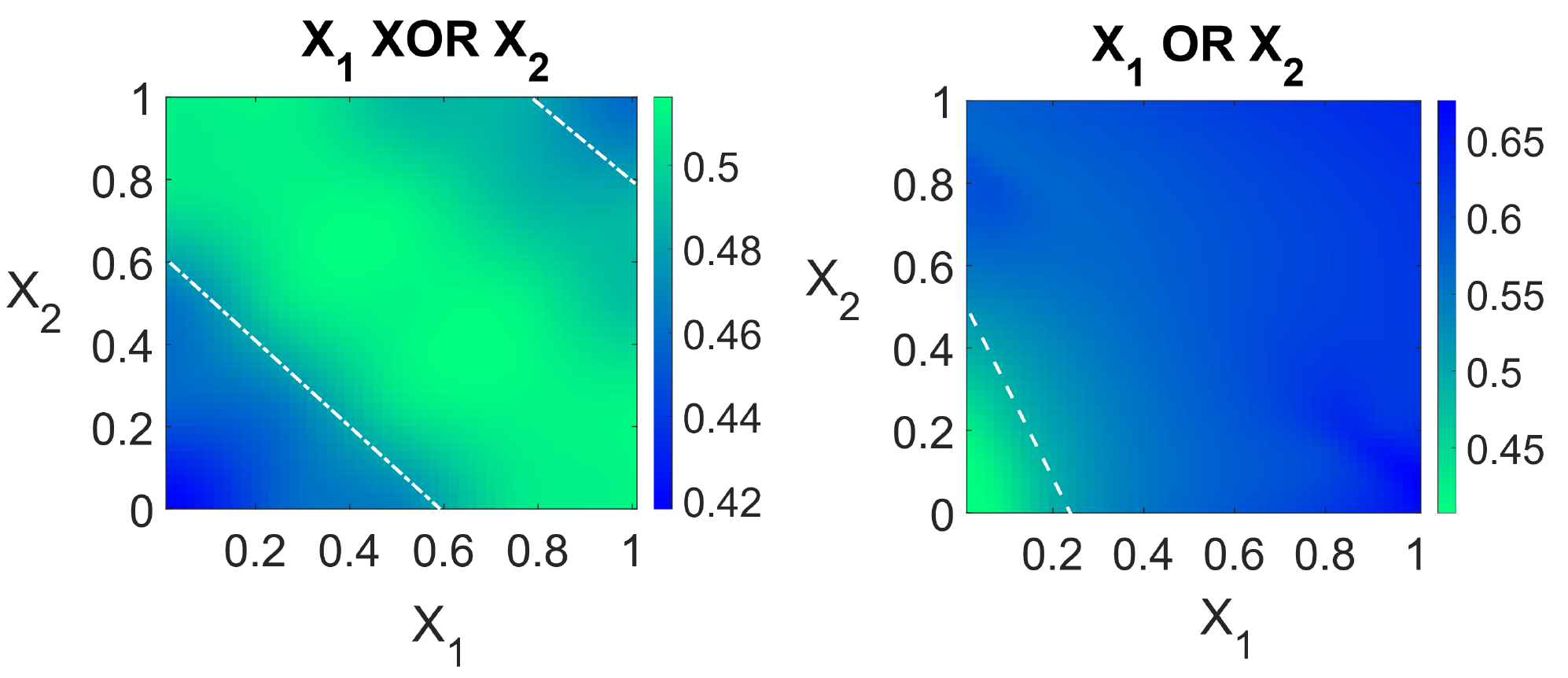}}
\caption{The heatmaps show the approximated decision surface (dotted white line) for the binary classification functions:``$X_1$ XOR $X_2$'' (left) and ``$X_1$ OR $X_2$'' (right).}
\label{fig:9}
\end{figure}

\section{Conclusion}
In Part \Rmnum{1} of this article, we design a biochemical reaction system capable of learning that first implements a complete fully connected neural network with the gradient descent based backpropagation process. The assignment module is an innovative mechanism to automatically input training samples like computers do, which can be applied to the biological realization of any iterative algorithm. The Sigmoid function with respect to real inputs is precisely computed by a revision of the autocatalytic reactions. Also, to equip our system with the learning capacity, we give an accurate and robust realization of the learning module whose final equilibrium concentration is exactly the fixed point of the gradient descent algorithm. Then, we provide the theoretical support for our design through a rigorous dynamic analysis to ensure asymptotic stability and exponential convergence, which might facilitate the building of a computational module library with explicit properties benefiting subsequent uses. 

Even though all computational modules allow exponential convergence, the realization errors cannot be avoided due to the finite phase length of oscillators and the complicated construction. In Part \Rmnum{2}, we propose a strict error analysis method and give an error upper bound of our BFCNN. In addition, all designs rely on precise rate constant settings and certain initial concentration choices, while uncertainties, noises, and disturbances always exist when it comes to biological environments. Therefore, enhancing the robustness of computational modules combined with the control theory will be considered in future work.

\section*{Appendix}
\appendices
In this appendix, we provide all proofs to the results given in the previous sections. 

\noindent \textbf{Proof of Proposition \ref{prop.1}:}

\begin{proof}
The oscillatory signals $O_1$, $O_3$ and $O_5$ will drive the designed 3 phases to occur orderly, so we can discuss their dynamics separately. Also, note that there are species coupling among 3 phases, so the given initial concentrations for some species are not global, such as for $C_i^l$ and $\tilde{C}^l_i$, which will change to be the limiting steady state points in the last phase. 

Firstly, for the phase $\mathcal{O}_1$, $\forall l \in \{1,...,\tilde{p}\}$ the dynamics takes
\begin{equation}
    \dot{s}^l(t) =\mathcal{X}(t) ({c^l} (t))^\top - s^l(t),
    \label{dy:O1}
\end{equation}
where $\mathcal{X}\in \mathbb{R}^{3\times p}$ satisfies $\mathcal{X}_{.i}(t)=(x^i_1(t),x^i_2(t),d^i(t))^\top$. Note that we give a concise form of (\ref{dy:O1}) to express dynamics, which does not follow the form of (\ref{eq:2}) but is the same with the latter essentially. From (\ref{eq:O1}), since species $X^i_q,~C_i^l,~D^i$ are all catalysts, we have $\mathcal{X}(0)=\mathcal{X}(t),~ c^l(0)=c^l(t),~ \forall t \geq 0$. The solution of (\ref{dy:O1}) is thus easily got $$s^l(t)=\mathcal{X}(0)(c^l(0))^\top(1-e^{-t})+s^l(0)e^{-t},$$ which implies that $\mathscr{L}[{s}^l(t)]=\bar{s}^l=\mathcal{X}(0)(c^l(0))^\top$ is GAS $\forall l$. We then realize to assign $block_1=(\bar{s}^1,...,\bar{s}^{\tilde p})=(\chi_{.1},...,\chi_{.\tilde p})$ to the input layer utilizing $\mathcal{O}_1$.  


Next, for the phase $\mathcal{O}_3$ it is easy from its dynamics to get $\mathscr{L}[c(t)]=\mathbb{0}$ and $\mathscr{L}[\tilde{c}(t)]=c(0)+\tilde{c}(0)=c(0)$, which is a GAS boundary equilibrium point relative to its stoichiometry compatibility class of any given initial point. This phase serves to use species $\tilde{C}_i^l$ to store the initial concentrations of $C_i^l$, but clear the latter's concentrations at the same time. 

Last, for the phase $\mathcal{O}_5$, $\forall l \in \{1,...,\tilde{p}$\} the dynamics follows
\begin{gather} \label{dy:o5}
\begin{split}
    \dot{c}^l_{l}(t) & = k \tilde{c}^l_{p-\tilde{p}+l}(t), ~\dot{c}^l_{o+\tilde{p}}(t) = k \tilde{c}^l_{o}(t), \\
    \dot{c}^l_j(t) & = k \tilde{c}^l_j(t),~~
    \dot{\tilde{c}}^l_i(t)= -k \tilde{c}^l_i(t),
\end{split}
\end{gather}
where $o \in I_l$, $j \notin I_l \bigcup \{p-\tilde{p}+l\}$ and $i \in \{1,...,p\}$. Similarly, the MAS of \eqref{dy:o5} is GAS in each stoichiometric compatibility class and has the limiting steady states 
\begin{gather}\label{eq:o5limi}
\begin{split}
    \mathscr{L}[c^l_{o+\tilde{p}}(t)] & =\tilde{c}^l_{o}(0),~
    \mathscr{L}[c^l_l(t)]  =\tilde{c}^l_{p-\tilde{p}+l}(0),\\
   \mathscr{L}[c^l_j(t)] &=\tilde{c}^l_j(0),~   \mathscr{L}[\tilde{c}(t)] = \mathbb{0}.
\end{split}
\end{gather}
This phase serves to update the equilibrium concentrations $\bar{c}$ of $C_i^l$ based on the real-time concentrations of $\tilde{C}_i^l$, and meanwhile clear the concentrations of $\tilde{C}_i^l$. Note that this update process is not a one-to-one assignment between $\bar{c}_i^l$ and $\tilde{c}^l_i(0)$ according to $i$ and $l$, but more like a ``mismatch'' assignment. As a result, $\bar{c}$ will change to be $\bar{c}=(e_{\tilde{p}+1},...,e_{2\tilde{p}})^\top$. Then the phase $\mathcal{O}_5$ shuts down while the phase $\mathcal{O}_1$ starts up through the evolution of the oscillatory signals $O_5$ and $O_1$. The limiting steady state $\bar{c}=(e_{\tilde{p}+1},...,e_{2\tilde{p}})^\top$ will act as the initial point of $c(t)$ to drive the phase $\mathcal{O}_1$ to start the new round of computation, we then get $block_2=(\chi_{.{\tilde{p}+1}},...,\chi_{.2\tilde p})$ through (\ref{dy:O1}) and realize the new round of assignment to the input layer for the next iteration. As the oscillatory signals $O_1$, $O_3$ and $O_5$ go on, $\bar{c}^l$ will change periodically as 
\begin{equation*}
    \underbrace{e_l}_{\bar{c}^l_{m=1}} \rightarrow \underbrace{e_{\tilde{p}+l}}_{\bar{c}^l_{m=2}} \rightarrow \cdots \rightarrow \underbrace{e_{(\frac{p}{\tilde{p}}-1)p+l}}_{\bar{c}^l_{m=p-\tilde{p}+l}} \rightarrow \underbrace{e_l}_{\bar{c}^l_{m=1+\frac{p}{\tilde{p}}}} \rightarrow \cdots,
\end{equation*}
and the continuous assignment of feeding 
$``block_1 \rightarrow block_2 \rightarrow \cdots \rightarrow block_{\frac{p}{\tilde{p}}} \rightarrow block_1 \rightarrow \cdots"$ into the input layer one by one during each iteration is realized automatically.

\end{proof}

\noindent \textbf{Proof of Lemma \ref{lemma_O9}:}

\begin{proof}
For all $l$, the dynamical equations of \eqref{CRNinO_9} can be described as 
\begin{gather} \label{eq:O_928}
\mathcal{O}_9:~~\dot{n}^+_{il} (t) = \dot{n}^-_{il}(t) = -n^+_{il}(t) n^-_{il}(t). 
\end{gather}
The compatibility class of \eqref{CRNinO_9} is a lay line $l_0=\{(n^-_{il}(t)-n^+_{il}(t) = n^-_{il}(0)-n^+_{il}(0)) \cap \mathbb{R}^{2}_{\geq 0} | n_{il}(0) \in \mathbb{R}^2_{\geq 0}\}$ in the plane for any initial condition pair, and it has a unique boundary equilibrium in every invariant set $l_0$. The case of $n^+_{il}(0) > n^-_{il}(0)$ implies $c < 0, a > 1$, and the trajectory entirely falls in $\{(x,y) \in \mathbb{R}^2_{\geq 0}|y=x+n^+_{il}(0)-n^-_{il}(0)\}$. Deduced from the analytic solution to \eqref{eq:O_928}, we have $\Vert n_{il}(t) - \bar{n}_{il} \Vert = |n^{\pm}_{il}(t) - \bar{n}^{\pm}_{il}|=|\frac{c}{1-ae^{-ct}}|$. Let $g(t) = \tilde{M} e^{-(\alpha +c) t} - \tilde{M} e^{-\alpha t} - 1$. In order to ensure that $g(t) > 0 $ holds for $\forall t \geq 0$, we select $\tilde{M}_1 > \frac{1}{a-1}, 0< \alpha_1 < -c $ to make $g(0) = 0, g'(t) > 0$ for $\forall t \geq 0$. Hence, we get the following 
\begin{equation}\label{eq:lemma1_2}
    \tilde{M}_1e^{-\alpha t}-\frac{1}{ae^{-ct}-1} = \frac{g(t)}{ae^{-ct}-1}  > 0,~~ \forall t \geq 0.
\end{equation}
Let $M_1 > \frac{c}{1-a}$, \eqref{eq:lemma1_2} implies that $\Vert n_{il}(t) - (\bar{n}_{il})^1 \Vert < M_1e^{-\alpha t}$. Due to the symmetry of the two concentration variables, when $n^+_{il}(0) < n^-_{il}(0)$, we can find $\Vert n_{il}(t) - (\bar{n}_{il})^1 \Vert < M_2 e ^{-\alpha_2 t}$ where $M_2 > \frac{c}{\frac{1}{a}-1} = \frac{ca}{1-a}, \alpha_2 \in (0,c)$. Thus, the MAS \eqref{CRNinO_9} converge globally to the unique boundary equilibrium point in any compatibility class at an exponential rate.    
\end{proof}

\noindent \textbf{Proof of Proposition \ref{prop_O13}:}

\begin{proof}
Consider the dynamical equations of \eqref{eq:SIG_p} and \eqref{eq:SIG_n} as follows
\begin{align}\label{eqq:Sig_p}
\dot{p_{il}^+}(t) &= n_{il}^+(t)p^+_{il}(t)(1-p^+_{il}(t)), ~\dot{n_{il}^+}(t) = -n^+_{il}(t),\\ 
\dot{p_{il}^-}(t) & = n_{il}^-(t)p^-_{il}(t)(p^-_{il}(t)-1),~\dot{n_{il}^-}(t) = -n^-_{il}(t). \label{eqq:Sig_n}
\end{align}
Obviously, the stoichiometric subspace is the whole nonnegative orthant, and $\mathcal{A}_p$, $\mathcal{A}_n$ is the boundary equilibrium set of \eqref{eqq:Sig_p} and \eqref{eqq:Sig_n}, respectively. Let $z_p^l(t)=(p^+_{il}(t),n^+_{il}(t))^{\top}, z_n^l(t)=(p^-_{il}(t),n^-_{il}(t))^{\top}$. The analytic solution to \eqref{eqq:Sig_p} with any $(p^+_{il}(0),n^+_{il}(0)) \in \mathbb{R}^2_{> 0}$ (the case of boundary initial conditions is trivial) is 
\begin{gather*}
p^+_{il}(t) = \frac{1}{1+\frac{1-p^+_{il}(0)}{p^+_{il}(0)} e^{(e^{-t}-1) n^+_{il}(0)}},~~
n^+_{il}(t) = n^+_{il}(0) e^{-t}.
\end{gather*}
Let $C_1=\frac{1-p^+_{il}(0)}{p^+_{il}(0)} > -1, C_2=n^+_{il}(0)>0$. Thus, for any $\epsilon_1, \epsilon_2$ \textgreater $0$, there exist $T$ \textgreater $0$, when $t$ \textgreater $T$, we have 
\begin{equation}
    |p^+_{il}(t) - \frac{1}{1+C_1 e^{-C_2}}| \leq \epsilon_1, ~~|n^+_{il}| \leq \epsilon_2.
\label{eq:32}
\end{equation}
With $\epsilon=\max\{\epsilon_1, \epsilon_2\}$, \eqref{eq:32} means that $\Vert z_p^l(t) - \bar{z}_p^l \Vert $ \textless $\epsilon$, where $\bar{z}_p^l=(\frac{1}{1+C_1 e^{-C_2}}, 0)^{\top}\in \mathcal{A}_p$. In other words, $\mathcal{A}_p$ is consistent with the $\omega$-limit set of MAS \eqref{eqq:Sig_p}. So there exist $T>0$ for $\forall \epsilon > 0$ such that dist$(z^l_p(t), \mathcal{A}_p) = \inf_{y \in \mathcal{A}_p} \Vert z^l_p(t) - y \Vert \leq \Vert z_p^l(t) - \bar{z}_p^l \Vert$ \textless $\epsilon$, which implies $\mathcal{A}_p$ is GAS, i.e., $z^l_p(t) \rightarrow \mathcal{A}_p$ from any nonnegtive initial conditions. 

Now we claim that the convergence is exponential when initial species $P^+_{il}$ concentrations are smaller than one, i.e., there exists $M_p, \gamma_p > 0$ for which dist$(z^l_p(t), \mathcal{A}_p) < M_p e^{-\gamma_p t}$. Let $C=1+C_1e^{-C_2}$, so $C_1, C_2, C > 0$ due to $p_{ip}^l(0)>0$. Since $C_1e^{(e^{-t}-1)C_2}>C_1e^{-C_2}>0$, we have
\begin{equation*}
    \frac{1+C_1e^{-C_2}}{1+C_1e^{(e^{-t}-1)C_2}} > \frac{C_1e^{-C_2}}{C_1e^{(e^{-t}-1)C_2}}=e^{-C_2e^{-t}}.
\end{equation*}
In addition, $e^{-C_2e^{-t}} \geq -C_2e^{-t} +1$ always holds for $\forall t > 0$ owing to $e^t \geq t+1 $. Then, according to the inequality relationship above, one can derive that
\begin{equation*}
\begin{split}
   \left| p^+_{il}(t) - \frac{1}{1+C_1e^{-C_2}}  \right| & = \frac{1}{1+C_1e^{-C_2}} - \frac{1}{1+C_1e^{(e^{-t}-1)C_2}} \\
   & = \frac{1}{1+C_1e^{-C_2}} (1-\frac{1+C_1e^{-C_2}}{1+C_1e^{(e^{-t}-1)C_2}})\\
   & \leq \frac{C_2e^{-t}}{1+C_1e^{-C_2}}.
\end{split}
\end{equation*}
And let $\gamma_1 = 1, M^{'}_p > \frac{C_2}{C}$, we obtain $\left| p^+_{il}(t) - \frac{1}{1+C_1e^{-C_2}}  \right|$ $< M^{'}_p e^{-\gamma_1 t}$ for all $t \geq 0$. Furthermore, $C_1 > 0$ means that $\bar{p}^+_{il} \in (0,1)$ and so $\bar{z}^l_p \in \mathcal{A}^e_p$. Combining with $\left|n^+_{il}(t)\right|=C_2e^{-t}$ and setting $\gamma_p=1, M_p > \max\{C_2, \frac{C_2}{C}\}=C_2$, it holds that dist$(z^l_p(t), \mathcal{A}^e_p) \leq \Vert z^l_{ip}(t) - \bar{z}^l_P \Vert <  M_p e^{-\gamma_p t}$. A similar argument following the solution to the IVP of \eqref{eqq:Sig_n}  
\begin{gather*}
p^-_{1l}(t) = \frac{1}{1+\frac{1-p^-_{1l}(0)}{p^-_{1l}(0)} e^{(1-e^{-t}) n^-_{1l}(0)}},~~
n^-_{1l}(t) = n^-_{1l}(0) e^{-t}
\end{gather*}
shows that $A_n$ is also globally asymptotically stable and dist$(z^l_n(t), \mathcal{A}^e_n) <  M_n e^{-\gamma_n t}$ with $\gamma_n=1, M_n > \max \{n^-_{il}(0), p^-_{il}(0)n^-_{il}(0)\} = n^-_{il}(0)$.
\end{proof}

\noindent \textbf{Proof of Lemma \ref{lemma_o13_F}:}

\begin{proof}
The dynamical equations of $\mathscr{C}^{\ast}$ can be written as 
\begin{gather}
\begin{split}
    \dot{s}(t) & = g(s(t)),\\
    \dot{h}(t) & = \sum_{i \in I} s_i(t) - h(t). 
\end{split}
\end{gather}
Define $r(t) \triangleq \sum_{i \in I} s_i(t)$ and $ \bar{r} = \sum_{i \in I} \bar{s}_i$.
Then, the trajectory of species $H$ is $h(t) = e^{-t} (\int_{0}^{t} r(\tau) e^{\tau} \mathrm{d}\tau +h(0))$. The index set $I$ is partitioned into $I_1, I_2, I_3$ according to the size of the 
convergence order $\beta_i$ of species $S_i$, where $\beta_i \in (0,1)$ if $i \in I_1$, $\beta_j =1 $ if $j \in I_2$, and $\beta_k \in (1, \infty)$ if  $k \in I_3$. Then, we have
\begin{gather*}
    \begin{split}
\vert h(t) - \bar{r} \vert & = \vert e^{-t} \int_{0}^{t} r(\tau) e^{\tau} \mathrm{d}\tau +e^{-t}h(0) -\bar{r} \vert \\
& = \vert e^{-t} \int_{0}^{t} (r(\tau) - \bar{r}) e^{\tau} \mathrm{d}\tau + (h(0)-\bar{r}) e^{-t}\vert \\
& \leq e^{-t}  \int_{0}^{t} \sum_{i \in I} q_i e^{\tau -\beta_i \tau} \mathrm{d} \tau + \vert h(0)-\bar{r} \vert e^{-t} \\
& = \sum_{i \in I_1} a_i(e^{-\beta_i t} - e^{-t}) +\sum_{j\in I_3} b_j (e^{-\beta_j t} - e^{-t}) \\ 
& + \sum_{k \in I_2}q_k te^{-t} + G e^{-t}
    \end{split}
\end{gather*}
where $a_i = \frac{q_i}{1-\beta_i} > 0, i \in I_1$, $b_j = \frac{q_j}{1-\beta_j}<0, j \in I_3$, and $G= \vert h(0) - \bar{r} \vert$. Since $0<\beta_i < 1, \forall i \in I_1$ and let $\beta = \min_{i \in I_1} \{\beta_i\}$, the above inequality become
\begin{gather*}
    \begin{split}
\vert h(t)-\bar{r} \vert < (\sum_{i \in I_1} a_i -  \sum_{j\in I_3} b_j + G) e^{-\beta t} + \sum_{k \in I_2}q_k te^{-t}.
    \end{split}
\end{gather*}
For $te^{-t}$, there exists $\gamma \in (0,1-\frac{1}{e}), R=1$ such that $te^{-t} < Re^{-\gamma t}$. (In effect, $0 < \gamma < 1-\frac{1}{eR}$ ). Therefore, it shows that
$\vert h(t)-\bar{r} \vert <  Ke^{-a t}$ with $K= (\sum_{i \in I_1} a_i -  \sum_{j\in I_3} b_j +\sum_{k \in I_2}q_k+G) >0, a = \min\{\beta, \gamma\}$.

Let $d(t) = h(t)-\bar{r}$, we find that $\dot{d}(t) = -kd(t) +kf(t)$, where $f(t) = r(t)-\bar{r}$. Due to the globally asymptotic stability of $S_i$, for any sufficiently small $\epsilon >0 $, there exists a fixed time $T>0$, such that when $t>T$, $\vert f(t) \vert < \epsilon$. In the case of $t \in \left[T,\infty\right)$, we write the solution of $d(t)$ as 
\begin{equation*}
\begin{split}
 \vert d(t) \vert & \leq \vert d(T) \vert e^{k(T-t)} + e^{-kt} \int_{T}^{t} \vert f(s) \vert e^{ks}\mathrm{d}s\\
&  < \vert d(T) \vert e^{k(T-t)} + \frac{\epsilon}{k} +\frac{e^{k(T-t)}}{k}. 
\end{split}
\end{equation*}
Therefore, for the fixed $T$ and any $\epsilon_1 > 0$, there exist $T_1 >T$, such that $\vert d(t) \vert < \vert d(T)\vert \epsilon_1 + \frac{\epsilon}{k}+\frac{\epsilon_1}{k}$ when $t>T_1$. Since the state variable $f(t)$ is bound due to the GAS of $S_i$, any finite initial condition $d(0)$ can be chosen to ensure the boundedness of $d(t)$, i.e., $\vert d(t) \vert \leq M$. Thus, we have $\vert d(t) \vert < \delta(\epsilon, \epsilon_1) $ when $t > T_1$ with $\delta = M\epsilon_1+\frac{\epsilon}{k}+\frac{\epsilon_1}{k}$, and the GAS of species $H$ can be guaranteed.
\end{proof}

\noindent \textbf{Proof of Theorem \ref{thm1}:}

\begin{proof}
Denoting state variables in phases from $\mathcal{O}_7 - \mathcal{O}_{13}$ as four vectors of dimension-suited $x_i (i=1,2,3,4)$ (may include components for repetitive species), and representing the corresponding vector fields as $f_i$, the complete system is
\begin{gather*}
\begin{split}
\dot{x}_1 & = f_1(x_1) o_{7}(t),~\dot{x}_2 = f_2(x_2) o_{9}(t),\\
\dot{x}_3 & = f_3(x_3) o_{11}(t),~\dot{x}_4 = f_4(x_4) o_{13}(t),
\end{split}
\end{gather*}
where $o_{i}(t)$ denotes concentrations of oscillatory species. The oscillator characteristic ensures that changes in species concentration in different phases only follow the governing equations designed at that stage. Here, concentration change of $O_i$ is regarded as functions to $t$, and there exists $b_o > 0$ s.t. $0 \leq o_i(t) < b_o$ for the oscillator we used. Note that positive $o_{i}(t)$ will not change the equilibrium points of module systems, and their asymptotic stability can be guaranteed by the \textit{Converse Lyapunov Theorem} and Lemma \ref{lemma_O9}, Proposition \ref{prop_O13}, and Lemma \ref{lemma_o13_F} since $\frac{d V(x) }{d t} = \nabla V(x) f_i(x) o_i(t) < 0$. Also, if we consider the time interval when $o_i(t) \geq \tilde{b}_o$ for some $\tilde{b}_o >0$, the exponential convergence is still maintained. 

In this way, with the equilibrium concentration of $N^+_{il}, N^-_{il}$ of lwsBCRN being the initial concentration of the sigBCRN,  the output is obtained in $P_{il}$ after phase $\mathcal{O}_{13}$. To increase the realization width, we only need to introduce more species and reactions in the relevant phases, and to expand the depth, the number of phases with the same design should be added. All operations only induce the embedding of independent subsystems. 
\end{proof}

\noindent \textbf{Proof of Proposition \ref{prop_bCRN1}:}

\begin{proof}
For the given initial condition $y(0)$,  the trajectory of $\mathscr{M}^2$ falls entirely on the stoichiometric compatibility class $\mathcal{S}_1 =\{y \in \mathbb{R}^4_{\geq 0}|2y_1+y_2+y_3-\tilde{y}_2-d =0\}$ with $d=d_1+d_2$. It's easy to follow that $\bar{y}_1 = 0$ and $\bar{y}_3 = y_1(0)$ and $\bar{y}_2$, $\bar{\tilde{y}}_2$ rely on the sign of $d_2$. The case $d_2 > 0$ means that $\tilde{Y}_2$ will be first consumed and $\bar{y}=(0,d_2,d_1,0)^{\top}$. Note that $\dot{y}_3(t)>0, \dot{y}_1(t)<0, \dot{\tilde{y}}_2 < 0$, so $y_3(t) < d_1, y_1(t) < y_1(0), \tilde{y}_2 < \tilde{y}_2(0)$ for all $t \in \left[0, \infty \right)$. We have $V(y)=y_1 + \vert y_2 - d_2 \vert + d_1-y_3  + \tilde{y}_2$, whose behavior is examined in different regions below.
\begin{enumerate}
\item In $D^1 = \{y \in \mathcal{S}_1|y_1<y_1(0), y_2 \geq d_2, y_3(0) < y_3 \leq d_1, \tilde{y}_2<\tilde{y}_2(0) \}$, the Lyapunov derivative is 
\begin{equation*}
       D^+ V(y) = \dot{y}_1 +\dot{y}_2 -\dot{y}_3 + \dot{\tilde{y}}_2 = -ky_1-2ky_2\tilde{y}_2. 
\end{equation*}
Thus, $D^+V(y) < 0$ in the interior $\mathbb{R}^4_{>0}$. Also, the boundary region $D^1_a = \{y_1=0, y_2=0\} \not\subset D^1 $, so the trajectory $y(t)$ will not run to $D^1_a$ at any time, and in effect, $D^1_b = \{y \in D^1|y_1=0, \tilde{y}_2=0\}$ represent the boundary equilibrium point on $\mathcal{S}_1$. Therefore, $D^+V(y)<0$ on $D^1 - \{\bar{y}\}$.  
\item $D^2=\{y \in S_1|y_1<y_1(0), y_2 < d_2, y_3(0) < y_3 \leq d_1, \tilde{y}_2<\tilde{y}_2(0)\}$. The Lyapunov derivative is $D^+V(y)=-3ky_1<0$ in the interior $\mathbb{R}_{>0}$. In the non-equilibrium boundary set $D^2_a=\{y \in D^2|y_1=0, y \neq \bar{y}\}$, other variables satisfy $y_2+y_3-\tilde{y}_2 = d_1+d_2$, and combining $y_2 < d_2$, we have $y_3 > d_1+\tilde{y}_2 > d_1$ for all $t$, which contradicts with the fact that $y \in D^2$. Therefore, $y(t)$ will not run into $D^2_a$ and $D^+V(y) < 0 $ on $D^2$. 
\end{enumerate}

In the case of $d_2<0$, the equilibrium point is $\bar{y}=(0,0,d_1,\tilde{d}_2)$, where $\tilde{d_2}=-d_2$. So $\tilde{d}_2 < \tilde{y}_2(t) < \tilde{y}_2(0)$. The candidate Lyapunov function becomes $V(y)=y_1+y_2+d_1-y_3+\tilde{y}_2-\tilde{d}_2$, and the derivative is written as $\frac{\partial V(y)}{\partial t} = -ky_1-2ky_2\tilde{y}_2$ that is negative in the interior of the positive quadrant. For the boundary set $D^3_a = \{y \in \mathcal{S}_1|y_1=0, y_2=0\}$, one can obtain $y_3 - d_1 =\tilde{y}_2 -\tilde{d}_2$, which holds only when $y_3 = d_1, \tilde{y}_2 = \tilde{d}_2$ in $D^3_a$, i.e., equilibrium concentration. If $y(t)$ enters into another boundary set $D^3_b =\{y \in \mathcal{S}|y_1=0, \tilde{y}_2=0 \}$, it  may cause conflict with $\tilde{y}_2 > \tilde{d}_2 >0$. Therefore, $\frac{\partial V(y)}{\partial t} < 0$ on $\mathcal{S}_1 - \{\bar{y}\}$.
\end{proof}

\noindent \textbf{Proof of Proposition \ref{prop_finit_ex}:}

\begin{proof}
The stoichiometric compatibility class can be partitioned as $\mathcal{S}_1=D^1 \bigcup D^2$ in the case of $d_2>0$. If $\bm{y}(t)$ runs in $D^1$, the GAS guarantees that for any fixed time $T$, one can find $\epsilon>0$ such that the trajectory falls in the region $D^1_{\epsilon}=\{y(t) \in D^1| \Vert y(t) -\bar{y} \Vert_1 \geq \epsilon \}$ when $t \leq T$. In such case, there must exist $\epsilon_1, \epsilon_4$ such that $\vert y_1 \vert \geq \epsilon_1, \vert \tilde{y}_2\vert \geq \epsilon_4$. Thus, we can obtain $D^+V(y) < -K_1V(y)$, where $K_1=\min\{k,2k\epsilon_4,2kd_2,\frac{k\epsilon_1+2kd_2\epsilon_4}{2y_1(0)+\tilde{y}_2(0)+y_2(0)}\}$. If $y(t)$ runs in $D^2$, we can also find $D^+V(y)<-K_2V(\bm{y})$ for all $y(t) \in D^2_{\epsilon}=\{y(t) \in D^2| \Vert y(t) - \bar{y}_1 \Vert \geq \epsilon\}$ and $K_2 = \min\{3k,\frac{k\epsilon_1}{y_1(0)}\}$. A similar argument shows that $\frac{\partial V(y)}{\partial t} < -K_3 V(y)$ for all $y \in D^3_{\epsilon} = \{y \in \mathcal{S}_1 | \Vert y(t)-\bar{y} \Vert \geq \epsilon\}$ in the case of $d_2<0$, where $K_3 = \min\{k, 2k\tilde{d}_2,2k\epsilon_2,\frac{k\epsilon_1+2k\epsilon_2\tilde{d}_2}{2y_1(0)+\tilde{y}_2(0)+\tilde{d}_2} \}$.
These above imply that $\mathscr{M}^2$ can converge to any $\epsilon$-neighbourhood of the equilibrium at an exponential rate with the convergence order $K=\min\{K_1, K_2, K_3\}$ at least. Combining the \textit{comparison lemma}, it holds that in any out-epsilon region, there exits $M_y=V(y(0))$, s.t. $\Vert y(t) - \bar{y} \Vert < M_y e^{-K t}$, so finite-time exponential stability is present.
\end{proof}

\noindent \textbf{Proof of Proposition \ref{prop_bCRN2}:}

\begin{proof} The proof method is similar to Proposition \ref{prop_bCRN1}. We also use the polyhedral function $V_x(x)=\Vert x(t) - \bar{x}\Vert$ as the candidate Lyapunov function. Four cases for different compatibility classes must be discussed to analyze the GAS of equilibrium points. Every compatibility class should be partitioned into several sectors depending on the variable $x_2(t)$ and $x_3(t)$, and the GAS can be obtained by examining the behavior of $V(x)$ for nine regions in total. In addition, we can also deduce a positive constant $K_x$ such that $\dot{V}_x(x) < -K_x V_x(x)$, and make $M_x = V_x(x(0))$ for $\Vert x(t) - \bar{x} \Vert < M_x e^{-\tilde{K}t}$.
\end{proof}

\noindent \textbf{Proof of Lemma \ref{lemma_O27}:}

\begin{proof}
We have the dynamical equations of $\mathscr{C}^*$ as follows
\begin{gather*}
\begin{split}
    \dot{s}(t) & = g(s(t)),\\
    \dot{h}(t) & = s_1(t)s_2(t) - h(t). 
\end{split}
\end{gather*}
Since 
\begin{gather*}
\vert s_1(t) s_2(t) - \bar{s}_1\bar{s}_2 \vert = \vert (s_1(t)-\bar{s}_1)\bar{s}_2 + \bar{s}_1(s_2(t)-\bar{s}_2) \\
 +(s_1(t)-\bar{s}_1)(s_2(t)-\bar{s}_2) \vert,
\end{gather*}
we can obtain
\begin{gather*}
    \begin{split}
\vert h(t) - \bar{s}_1\bar{s}_2\vert & = \vert e^{-t} \int_{0}^{t} s_1(\tau) s_2(\tau)e^{\tau} \mathrm{d}\tau +e^{-t}h(0) -\bar{s}_1\bar{s}_2 \vert \\
& \leq \vert e^{-t} \int_{0}^{t} [s_1(\tau)s_2(\tau) - \bar{s}_1\bar{s}_2] e^{\tau} \mathrm{d}\tau \vert
 + G_s e^{-t}\\
 & \leq e^{-t} \int_{0}^{t} [\bar{s}_2 q_1 e^{(1-\beta_1) \tau} + \bar{s}_1 q_2 e^{(1-\beta_2) \tau} + q_1 q_2 \\
  & \cdot e^{(1-\beta_1 -  \beta_2) \tau}] \mathrm{d}\tau + G_s e^{-t},
    \end{split}
\end{gather*}
where $G_s = \vert h(0)-\bar{s}_1\bar{s}_2 \vert$. Let $a_1 = \frac{\bar{s}_2 q_1}{\vert 1-\beta_1 \vert}, a_2 = \frac{\bar{s}_1 q_2}{\vert 1-\beta_2 \vert}, a_3 = \frac{q_1 q_2}{\vert 1-\beta_1-\beta_2 \vert}, \tilde{a}_1=\mathbb{I}_{\{\beta_1\neq1\}}a_1 + \mathbb{I}_{\{\beta_1=1\}}\bar{s}_2q_1, \tilde{a}_2=\mathbb{I}_{\{\beta_2\neq1\}}a_2 + \mathbb{I}_{\{\beta_2=1\}}\bar{s}_1q_2$, where $\mathbb{I}$ is an indicator function. Based on the size of $\beta_1$ and $\beta_2$, we directly give the following characterization of exponential convergence $\vert h(t) - \bar{s}_1 \bar{s}_2 \vert < M_s e^{-\beta t}$ in different cases, which can be obtained by integration and scaling
\begin{enumerate}
    \item $\beta_1 + \beta_2 <1$: 
    $M_s = a_1+a_2+a_3+G_s, \beta = \min\{\beta_1, \beta_2\}$
    \item $\beta_1 + \beta_2 =1$: 
    $M_s = a_1+a_2+q_1q_2+G_s, \beta = \min\{\beta_1, \beta_2, 1-\frac{1}{e}\}$ 
    \item  $\beta_1 + \beta_2 > 1 (\beta_i = 1$ for some $i$): $M_s=\tilde{a}_1+\tilde{a}_2+a_3+G_s, \beta = \min\{1-\frac{1}{e}, \beta_1, \beta_2\}$
    \item  $\beta_1 + \beta_2 > 1 (\beta_i \neq 1$ for all $i$):
    $M_s=a_1+a_2+a_3+G_s, \beta = \min\{1, \beta_1, \beta_2\}$.
\end{enumerate}
In general, the exponential convergence of the parent network $\mathscr{C}^*$ can be guaranteed due to that of the independent subnetwork $\mathscr{C}, \tilde{\mathscr{C}}$.
\end{proof}

\noindent \textbf{Proof of Theorem \ref{thm_final}:}

\begin{proof}
The GES was assured by Lemma \ref{lemma_o13_F} and Lemma \ref{lemma_O27}. Besides, Lemma \ref{lemma_O27} implies that the equilibrium points of blocks from the root to leaf compute the value shaded by the same color in Fig.\ref{fig:SFP}, respectively, with catalysts $X_i$ storing results from previous modules. So ngBCRN finally obtains every negative gradient in two species $Par^+_i, Par^-_i$ in every training round. With $Par^{\pm}_i$ as the input signals species, the fixed point of the uBCRN system can be represented by
\begin{equation*}
    \bar{\Delta w}^{\pm} = \eta \cdot par^{\pm} (0), ~\bar{w}^{\pm} = \gamma^{\pm}(0) + \bar{\Delta w}^{\pm}
\end{equation*}
implies the successful update in every training round. Here, $w^{\pm}(t)=((w^{\pm}_1(t))^{\top}, (w^{\pm}_2(t))^{\top})^{\top} \in \mathbb{R}^{3 \times 3}$ with $w^{\pm}_{2}(t) \in \mathbb{R}^{1\times3}$ denoting concentrations of $W^{\pm}_5, W^{\pm}_6, B^{\pm}_3$ from the left to right, respectively, and $\Delta w^{\pm}(t) \in \mathbb{R}^{3 \times 3}$ has the same correspondence between incremental species numbers and concentration variable subscripts as that in $w^{\pm}(t)$. For $par^{\pm}(t) \in \mathbb{R}^{3 \times 3}$, $par^{\pm}_{1\cdot}(t), par^{\pm}_{2\cdot}(t), par^{\pm}_{3\cdot}(t)$ refers to concentration of $\{Par^{\pm}_1, Par^{\pm}_3, Par^{\pm}_7\}$, $\{Par^{\pm}_2, Par^{\pm}_4, Par^{\pm}_8\}$, $\{Par^{\pm}_5, Par^{\pm}_6, Par^{\pm}_9\}$, separately.
\end{proof} 
%

\noindent \textbf{Initial weight species concentration for ``OR'' and ``XOR''}
\begin{equation*}
w^+(0)=
\begin{bmatrix}
4.1968 &	1.8964 &	2.8458\\
1.4442 &	1.2924 &	3.0160\\
7.4030 &	8.3528 &	2.5640
\end{bmatrix} ~
w^-(0)=
\begin{bmatrix}
2 &	6	&6\\
2&	2	&2\\
6&	10	&2
\end{bmatrix},
\end{equation*}
and
\begin{equation*}
w^+(0)=
\begin{bmatrix}
3&	3	&2.5\\
4&	3	&2\\
2&	3	&2.5
\end{bmatrix} ~
w^-(0)=
\begin{bmatrix}
4 &	4	&1\\
3	&2&	2.5\\
1	&2&	4
\end{bmatrix}.~
\end{equation*}
The final fixed points $\bar{w}^+$ (left) and $\bar{w}^-$ (right) are
\begin{gather*}
\begin{bmatrix}
4.6620 &	1.9115 &3.4033 \\
2.5240 &	2.2642 &	5.3571\\
7.4863 &	8.7303 &	3.3334
\end{bmatrix}
\begin{bmatrix}
2.3766 &	6.0094 &	6.4861\\
3.2772&	3.2779&	4.5249 \\
6.0113 &	10.2436 &	2.3547
\end{bmatrix},
\end{gather*}
and 
\begin{gather*}
\begin{bmatrix}
3.2948 &	3.2884 &	3.0536\\
4.3969	&3.3891	& 2.8581\\
2.4997 &	3.4997	&3.2926
\end{bmatrix}
\begin{bmatrix}
4.2736 &	4.2709 &	1.4894 \\
3.3467 &	2.3475& 3.2918 \\
1.3883 &	2.3819 &4.6351 
\end{bmatrix}.
\end{gather*}

\section*{References}

\bibliographystyle{ieeetr}
\bibliography{main}

\end{document}